\newtheorem{theorem}{Theorem}[section]
\newtheorem{conjecture}[theorem]{Conjecture}
\newtheorem{corollary}[theorem]{Corollary}
\newtheorem{definition}[theorem]{Definition}
\newtheorem{lemma}[theorem]{Lemma}
\renewenvironment{proof}[1][Proof]{\noindent\textbf{#1.} }{\ \rule{0.5em}{0.5em}}
\begin{document}
\title[Algebraic properties of bounded Killing vector fields]{Algebraic properties of bounded \\ Killing vector fields}
\author{Ming Xu \& Yu.\,G. Nikonorov}

\address{Ming Xu \newline
School of Mathematical Sciences \newline
Capital Normal University \newline
Beijing 100048, P. R. China}
\email{mgmgmgxu@163.com}

\address{Yu.\,G. Nikonorov \newline
Southern Mathematical Institute  \newline
of the Vladikavkaz Scientific Centre  \newline
of the Russian Academy of Sciences \newline
Vladikavkaz, Markus str. 22, 362027, Russia}
\email{nikonorov2006@mail.ru}

\date{}
\maketitle

\begin{abstract}
In this paper, we consider a connected Riemannian manifold $M$ where a connected Lie group $G$ acts effectively and isometrically.
Assume $X\in\mathfrak{g}=\mathrm{Lie}(G)$ defines a bounded Killing vector field, we find some crucial algebraic properties of the
decomposition $X=X_r+X_s$ according to a Levi decomposition $\mathfrak{g}=\mathfrak{r}(\mathfrak{g})+\mathfrak{s}$, where $\mathfrak{r}(\mathfrak{g})$
is the radical, and $\mathfrak{s}=\mathfrak{s}_c\oplus\mathfrak{s}_{nc}$ is a Levi subalgebra. The decomposition $X=X_r+X_s$ coincides with the abstract
Jordan decomposition of $X$, and is unique in the sense that it does not depend on the choice of $\mathfrak{s}$. By these properties, we prove that the
eigenvalues of $\mathrm{ad}(X):\mathfrak{g}\rightarrow\mathfrak{g}$ are
all imaginary. Furthermore, when $M=G/H$ is a Riemannian homogeneous space,
we can completely determine all bounded Killing vector fields induced by vectors in $\mathfrak{g}$. We prove that the space of all these bounded Killing
vector fields, or equivalently the space of all bounded vectors in $\mathfrak{g}$ for $G/H$,
is a compact Lie subalgebra, such that its semi-simple part is the ideal  $\mathfrak{c}_{\mathfrak{s}_c}(\mathfrak{r}(\mathfrak{g}))$ of $\mathfrak{g}$, and
its Abelian part is the sum of
$\mathfrak{c}_{\mathfrak{c}(\mathfrak{r}(\mathfrak{g}))}
(\mathfrak{s}_{nc})$ and all
two-dimensional irreducible $\mathrm{ad}(\mathfrak{r}(\mathfrak{g}))$-representations in
 $\mathfrak{c}_{\mathfrak{c}(\mathfrak{n})}(\mathfrak{s}_{nc})$ corresponding to
nonzero imaginary weights, i.e. $\mathbb{R}$-linear functionals
$\lambda:\mathfrak{r}(\mathfrak{g})\rightarrow
\mathfrak{r}(\mathfrak{g})/\mathfrak{n}(\mathfrak{g})
\rightarrow\mathbb{R}\sqrt{-1}$, where $\mathfrak{n}(\mathfrak{g})$ is the nilradical.
\smallskip

\textbf{Mathematics Subject Classification (2010)}: 22E46, 53C20, 53C30.

\textbf{Key words}: bounded Killing vector field; Killing vector field of constant length;
bounded vector for a coset space; Levi decomposition; Levi subalgebra; nilradical; radical
\end{abstract}

\section{Introduction}
In a recent paper \cite{Ni2019}, the second author considered a Riemannian manifold $M$, which permits the
effective isometric action of a Lie group $G$. He studied the Killing vector field of constant length
induced by a vector $X\in\mathfrak{g}=\mathrm{Lie}(G)$, and proved
that the eigenvalues of $\mathrm{ad}(X):\mathfrak{g}\rightarrow\mathfrak{g}$ are all imaginary.
This discovery inspired him to propose the following conjecture (see Conjecture 1 in \cite{Ni2019}).

\begin{conjecture}\label{question 1}
Assume that a semi-simple Lie group $G$ acts effectively and isometrically on a connected Riemannian manifold $M$, and the vector $X\in\mathfrak{g}$ defines a Killing vector field of
constant length. Then $X$ is a compact vector in $\mathfrak{g}$,
i.e. the subalgebra $\mathbb{R}X$ is compactly imbedded in $\mathfrak{g}$.
\end{conjecture}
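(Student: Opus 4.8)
The plan is to show that, because $\mathfrak{g}$ is semisimple, $X$ must be an \emph{elliptic} element, i.e.\ $\mathrm{ad}(X)$ is semisimple with purely imaginary eigenvalues; this is exactly the condition for $\mathbb{R}X$ to be compactly imbedded in $\mathfrak{g}$. The starting observation is that constant length is inherited by the entire $\mathrm{Ad}(G)$-orbit of $X$. If $\phi_g$ denotes the isometry of $g\in G$, then $(d\phi_g)_p(X^*_p)=(\mathrm{Ad}(g)X)^*_{g\cdot p}$, and since $\phi_g$ is an isometry the Killing field $(\mathrm{Ad}(g)X)^*$ again has constant length, equal to the same value $\ell$ (the constant value of $|X^*_p|$). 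Thus every vector in $\mathrm{Ad}(G)X$ induces a Killing field of constant length $\ell$, a fact I would exploit by moving $X$ along a cleverly chosen one-parameter subgroup.

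Next I would invoke the complete additive Jordan decomposition in the real semisimple algebra $\mathfrak{g}$, writing $X=E+H+N$ with $E$ elliptic, $H$ hyperbolic and $N$ nilpotent, pairwise commuting. Since the nilpotent summand does not affect eigenvalues, the eigenvalues of $\mathrm{ad}(X)$ coincide with those of the semisimple operator $\mathrm{ad}(E)+\mathrm{ad}(H)$, whose real parts are exactly the (real) eigenvalues of $\mathrm{ad}(H)$. As the result of \cite{Ni2019} guarantees that all eigenvalues of $\mathrm{ad}(X)$ are imaginary, this forces $\mathrm{ad}(H)=0$, hence $H=0$ because $\mathfrak{g}$ has trivial center. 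It thus remains to prove $N=0$, for then $X=E$ is elliptic and the conjecture follows.

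The crux is eliminating $N$. Because $E$ centralizes $N$, the nilpotent $N$ lies in the reductive subalgebra $\mathfrak{z}_{\mathfrak{g}}(E)$, so Jacobson--Morozov provides an $\mathfrak{sl}(2,\mathbb{R})$-triple \emph{inside} $\mathfrak{z}_{\mathfrak{g}}(E)$ whose neutral element $H_0$ satisfies $[H_0,N]=2N$ and, crucially, $[H_0,E]=0$. Setting $g_s=\exp(sH_0)$ then gives $\mathrm{Ad}(g_s)X=E+e^{2s}N$, and by the first paragraph the field $(E+e^{2s}N)^*$ has constant length $\ell$ for every $s$. Evaluating at a fixed point $p$ and writing $u=e^{2s}$, the quantity $|E^*_p|^2+2u\langle E^*_p,N^*_p\rangle+u^2|N^*_p|^2$ equals $\ell^2$ for all $u>0$; comparing coefficients of this polynomial identity in $u$ forces $N^*_p=0$. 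As $p$ is arbitrary, $N$ induces the zero field on $M$, so $\exp(tN)$ acts trivially, and effectiveness of the action yields $N=0$. I expect the main obstacle to be exactly this last maneuver: arranging an amplifying one-parameter subgroup that scales $N$ while fixing $E$ (which is what the commuting $\mathfrak{sl}(2,\mathbb{R})$-triple buys us) and then passing from $N^*\equiv 0$ back to $N=0$ via effectiveness. I note finally that the same polynomial argument, now read as the inequality $u^2|N^*_p|^2+\cdots\le\ell^2$, works verbatim if $X^*$ is only assumed bounded rather than of constant length.
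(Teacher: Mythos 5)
Your proof is correct, but it takes a genuinely different route from the paper's. The paper proves the stronger Theorem \ref{theorem 1} (boundedness alone suffices, and $X$ lands in a compact \emph{ideal} of $\mathfrak{g}$) by geometric reduction: restrict to an orbit $G/H$, note $\mathfrak{h}$ is compactly imbedded, enlarge it to a maximal compactly imbedded subalgebra, pass to the universal cover so that $\widetilde{G}/\widetilde{K}=\widetilde{G}_{nc}/\widetilde{K}_{nc}$ is a symmetric space of noncompact type, and then invoke Wolf's theorem \cite{Wo1964} that bounded Killing fields on such negatively curved spaces vanish, killing $X_{nc}$ outright. You instead argue purely algebraically from the refined Jordan decomposition $X=E+H+N$: the hyperbolic part is eliminated by citing the spectral theorem of \cite{Ni2019} (imaginary eigenvalues for constant-length fields), and the nilpotent part by the $\mathfrak{sl}(2,\mathbb{R})$-scaling trick $\mathrm{Ad}(\exp(sH_0))X=E+e^{2s}N$ combined with isometry-invariance of the length and effectiveness. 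Both steps are sound: the existence of a triple through $N$ inside $\mathfrak{z}_{\mathfrak{g}}(E)$ is the standard Jacobson--Morozov-in-a-centralizer fact (a nilpotent element of $\mathfrak{g}$ commuting with the semisimple $E$ lies in the semisimple part of the reductive algebra $\mathfrak{z}_{\mathfrak{g}}(E)$), your use of \cite{Ni2019} is not circular (imaginary eigenvalues is strictly weaker than compactness of $\mathbb{R}X$, precisely because of the possible nilpotent part you then remove), and ellipticity of $X=E$ does give a compact vector since $\mathrm{Inn}(\mathfrak{g})$ is closed in $\mathrm{GL}(\mathfrak{g})$ when $\mathfrak{g}$ is semisimple. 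As for what each approach buys: yours avoids symmetric-space geometry and curvature theorems entirely, and your scaling argument is close in spirit to the paper's own Claim 2 in the proof of Theorem \ref{theorem 2} (polynomial growth of $\mathrm{Ad}(\exp(tY))X$ for $Y$ in the nilradical); on the other hand, since the spectral result of \cite{Ni2019} is only available for constant length, your argument as written does not recover the paper's Theorem \ref{theorem 1} for merely bounded fields (only your $N$-elimination survives, as you note), and your conclusion --- $X$ is a compact vector --- is weaker than the paper's conclusion that $X$ lies in a compact ideal, though it is exactly what the conjecture asks.
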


See Section \ref{subsection-2.1} for the notions of compact
vector and compactly imbedded subalgebra.

Our initial motivation is to prove Conjecture \ref{question 1}. Our approach is different from that
in \cite{Ni2019}, which depends on the Riemannian structure and the constant length condition. Here we only need to assume that the vector $X\in\mathfrak{g}$ defines a bounded Killing vector
field.

Recall that a Killing vector field on a Riemannian manifold is called bounded if its length function with respect to the given
metric is a bounded function.
This condition is relatively weak. For example, any
Killing vector field on a compact Riemannian manifold is bounded. The special case,
Killing vector fields of constant length, is intrinsically related to Clifford--Wolf translations \cite{BN2008,DX2014}. See
\cite{Ni2015,WPX,XW} for some recent progress on this subject.
On the other hand, curvature conditions may provide obstacles or rigidities for bounded Killing vector fields. For example,
on a complete negatively curved Riemannian manifold, bounded
Killing vector field must be zero \cite{Wo1964}. On a complete
non-positively curved Riemannian manifold, a bounded Killing vector field must be parallel \cite{BN2008-2}.

We first prove the following theorem solving Conjecture \ref{question 1}, not only for Killing vector
fields of constant length, but also for all bounded Killing vector fields.

\begin{theorem}\label{theorem 1}
Let $M$ be a connected Riemannian manifold on which a connected semi-simple Lie group acts effectively and isometrically. Assume $X\in\mathfrak{g}$ defines a bounded Killing vector field. Then $X$ is contained in a
compact ideal in $\mathfrak{g}$.
\end{theorem}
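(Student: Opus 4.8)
The plan is to study the set $\mathfrak{b}\subseteq\mathfrak{g}$ of all vectors that induce a bounded Killing vector field, and to show that it is a compact ideal containing $X$. The key object is the ``sup-norm'' $\ell(Y):=\sup_{p\in M}|K_Y(p)|_p$, where $K_Y$ denotes the Killing field induced by $Y\in\mathfrak{g}$, taking values in $[0,+\infty]$.

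First I would record two elementary invariance properties of $\ell$. Since $Y\mapsto K_Y$ is $\mathbb{R}$-linear and $|K_{Y+Z}(p)|_p\le|K_Y(p)|_p+|K_Z(p)|_p$, the function $\ell$ is a (possibly infinite) seminorm; in particular $\ell(\lambda Y)=|\lambda|\,\ell(Y)$ and $\mathfrak{b}=\{Y:\ell(Y)<\infty\}$ is a linear subspace. Next, for $g\in G$ one has the push-forward identity $K_{\mathrm{Ad}(g)Y}=g_\ast K_Y$, and since $g$ is an isometry this gives $|K_{\mathrm{Ad}(g)Y}(p)|_p=|K_Y(g^{-1}p)|_{g^{-1}p}$, whence $\ell(\mathrm{Ad}(g)Y)=\ell(Y)$. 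Thus $\ell$ is $\mathrm{Ad}(G)$-invariant, so $\mathfrak{b}$ is an $\mathrm{Ad}(G)$-invariant subspace; as $G$ is connected, differentiating shows $[\mathfrak{g},\mathfrak{b}]\subseteq\mathfrak{b}$, i.e. $\mathfrak{b}$ is an ideal of $\mathfrak{g}$.

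Now I would invoke semisimplicity. Decompose $\mathfrak{g}=\mathfrak{g}_c\oplus\mathfrak{g}_{nc}$ into the sum of its compact and non-compact simple ideals. As an ideal of a semi-simple Lie algebra, $\mathfrak{b}$ is a sum of simple ideals, so it suffices to show that no non-compact simple ideal lies in $\mathfrak{b}$; then $\mathfrak{b}\subseteq\mathfrak{g}_c$ is compact and contains $X$, which proves the theorem. Suppose, for contradiction, that some non-compact simple ideal $\mathfrak{s}\subseteq\mathfrak{b}$. Using a Cartan decomposition $\mathfrak{s}=\mathfrak{k}\oplus\mathfrak{p}$ and a maximal abelian subspace $\mathfrak{a}\subseteq\mathfrak{p}$, non-compactness yields a nonzero restricted root $\alpha$, and I choose $H\in\mathfrak{a}$ and a root vector $0\neq E\in\mathfrak{s}$ with $[H,E]=\alpha(H)E$ and $\alpha(H)>0$, so that $\mathrm{Ad}(\exp(tH))E=e^{\alpha(H)t}E$.

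The contradiction then comes from combining the two properties of $\ell$. On one hand $\ell(\mathrm{Ad}(\exp(tH))E)=\ell(E)$ by $\mathrm{Ad}$-invariance; on the other hand $\ell(\mathrm{Ad}(\exp(tH))E)=\ell(e^{\alpha(H)t}E)=e^{\alpha(H)t}\ell(E)$ by homogeneity. Hence $e^{\alpha(H)t}\ell(E)=\ell(E)$ for all $t\in\mathbb{R}$, which forces $\ell(E)=0$ since $\alpha(H)>0$. But effectiveness of the action together with connectedness of $G$ makes the map $Y\mapsto K_Y$ injective, so $E\neq0$ yields $K_E\not\equiv0$ and $\ell(E)>0$, a contradiction. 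I expect the only genuinely technical points to be the two invariance properties of $\ell$ (especially the identity $K_{\mathrm{Ad}(g)Y}=g_\ast K_Y$ and the injectivity of $Y\mapsto K_Y$); the existence of the ``hyperbolic'' pair $(H,E)$ inside a non-compact simple ideal is standard restricted-root theory.
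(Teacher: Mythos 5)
Your proof is correct, and it takes a genuinely different route from the paper's. The paper restricts the Killing field to an orbit $G\cdot x=G/H$, translates boundedness into the algebraic notion of a bounded vector for the coset space (Lemma \ref{lemma 1}), uses the compactly imbedded isotropy algebra (Lemma \ref{lemma -1}) to pass, via Lemmas \ref{lemma 2} and \ref{lemma 3}, to $\widetilde{G}/\widetilde{K}$ for a maximal compactly imbedded $\mathfrak{k}\supset\mathfrak{h}$; since $\mathfrak{g}$ is semi-simple, this quotient is a symmetric space of non-compact type, and Wolf's theorem \cite{Wo1964} on bounded Killing fields under non-positive curvature and negative Ricci curvature forces the non-compact component $X_{nc}$ to vanish, leaving $X=X_c\in\mathfrak{g}_c$. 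You instead stay on $M$ itself: the $\mathrm{Ad}(G)$-invariant sup-seminorm $\ell$ makes the set $\mathfrak{b}$ of bounded vectors an ideal, and the hyperbolic scaling identity $\mathrm{Ad}(\exp(tH))E=e^{t\alpha(H)}E$ is incompatible with the $\mathrm{Ad}$-invariance and finiteness of $\ell$ unless $\ell(E)=0$, which effectiveness rules out; hence $\mathfrak{b}$ contains no non-compact simple ideal and $X\in\mathfrak{b}\subset\mathfrak{g}_c$. The facts you flag as technical are indeed standard and hold as you state them: $K_{\mathrm{Ad}(g)Y}=g_{*}K_Y$, injectivity of $Y\mapsto K_Y$ under effectiveness, and the existence of a pair $(H,E)$ with $[H,E]=\alpha(H)E$, $\alpha(H)>0$, in any non-compact simple real Lie algebra. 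Your route is more elementary and self-contained --- no curvature input, no symmetric-space geometry, no reduction to coset spaces --- and is close in spirit to Tits' argument for bounded automorphisms \cite{Ti1964}, which the paper invokes only for other purposes; it also yields something extra for free, namely that the entire space $\mathfrak{b}$ of bounded vectors is a compact ideal, a statement the paper establishes only later and only in the homogeneous setting (Theorem \ref{main-cor-2}). What the paper's longer route buys is that the coset-space machinery it develops is reused in the proofs of Theorem \ref{theorem 2} and Theorem \ref{theorem 3}; note also that your exponential-growth mechanism reappears there in polynomial form, in Claim 2 of the proof of Theorem \ref{theorem 2}, where $\mathrm{Ad}(\exp(tY))X$ grows polynomially for $Y$ in the nilradical.
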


As a compact ideal in the semi-simple Lie algebra $\mathfrak{g}$ generates a compact semi-simple subgroup of $G$, we see immediately after Theorem \ref{theorem 1} that $X$ is a compact vector when it is bounded, and hence $\mathrm{ad}(X):\mathfrak{g}\rightarrow\mathfrak{g}$ has only imaginary eigenvalues.

It is then natural to further study
this spectral property of bounded Killing vector fields when $G$ is not semi-simple.
For this purpose, we take a Levi decomposition $\mathfrak{g}=\mathfrak{r}(\mathfrak{g})+\mathfrak{s}$ for $\mathfrak{g}=\mathrm{Lie}(G)$ (see Section \ref{subsection-2.1}), and then we
have the decomposition $X=X_r+X_s$ accordingly.
Applying the argument for Theorem \ref{theorem 1}, some technique in the proof of Lemma 2.3 and Lemma 2.4 in \cite{Wo2017}, and more Lie algebraic discussion from Lemma \ref{lemma 10} and Corollary \ref{lemma 5} (see Lemma 3 in \cite{Ti1964} for similar argument for bounded automorphisms of Lie groups), we prove the following crucial algebraic properties
for the bounded Killing vector field $X$.

\begin{theorem}\label{theorem 2}
Let $M$ be a connected Riemannian manifold on which
the connected Lie group $G$ acts effectively and isometrically. Assume that $X\in\mathfrak{g}$ defines a bounded
Killing vector field, and $X=X_r+X_s$ according to the Levi
decomposition $\mathfrak{g}=\mathfrak{r}(\mathfrak{g})+\mathfrak{s}$, where
$\mathfrak{s}=\mathfrak{s}_c\oplus\mathfrak{s}_{nc}$.
Then we have the following:
\begin{enumerate}
\item The vector $X_s\in\mathfrak{s}$ is contained in
    the compact semi-simple ideal
    $\mathfrak{c}_{\mathfrak{s}_c}(\mathfrak{r}(\mathfrak{g}))$
    of $\mathfrak{g}$;
\item The vector $X_r\in\mathfrak{r}$ is contained in the center $\mathfrak{c}(\mathfrak{n})$ of $\mathfrak{n}$.
\end{enumerate}
\end{theorem}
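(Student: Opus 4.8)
The plan is to convert the geometric hypothesis into a purely Lie-algebraic one and then run a structural argument on $\mathfrak{g}=\mathfrak{r}(\mathfrak{g})+\mathfrak{s}$. First I would show that a bounded Killing field forces the entire adjoint orbit $\mathrm{Ad}(G)X$ to be a bounded subset of $\mathfrak{g}$. Since the action is effective and $G$ is connected, a Killing field that vanishes identically is $0$, so the isotropy subalgebras $\mathfrak{g}_p=\{Y:Y(p)=0\}$ satisfy $\bigcap_{p\in M}\mathfrak{g}_p=0$; choosing finitely many points $p_1,\dots,p_k$ with $\bigcap_i\mathfrak{g}_{p_i}=0$, the evaluation map $Y\mapsto(Y(p_1),\dots,Y(p_k))$ is a linear injection and pulls back a norm to $\mathfrak{g}$. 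Because $(\mathrm{Ad}(g)X)(p_i)=g_*\big(X(g^{-1}p_i)\big)$ and $g$ is an isometry, $|(\mathrm{Ad}(g)X)(p_i)|=|X(g^{-1}p_i)|\le\sup_M|X|$, so $\mathrm{Ad}(G)X$ is bounded. This is the reduction already underlying Theorem \ref{theorem 1}, and it recasts everything as a statement about a bounded vector in the sense of Tits.

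From boundedness of the orbit I would extract two facts. (a) For $Y\in\mathfrak{n}=\mathfrak{n}(\mathfrak{g})$ the operator $\mathrm{ad}(Y)$ is nilpotent on $\mathfrak{g}$, so $t\mapsto\mathrm{Ad}(\exp tY)X=e^{t\,\mathrm{ad}(Y)}X$ is a polynomial curve; a bounded polynomial is constant, hence $[Y,X]=0$. Thus $X\in\mathfrak{c}_{\mathfrak{g}}(\mathfrak{n})$, i.e. $\mathrm{ad}(X)|_{\mathfrak{n}}=0$. (b) The projection $\mathfrak{g}\to\mathfrak{g}/\mathfrak{r}(\mathfrak{g})\cong\mathfrak{s}$ carries the bounded orbit to a bounded orbit of $X_s$ under the adjoint action of the semisimple quotient; the semisimple case (the algebraic core of Theorem \ref{theorem 1}) then gives $X_s\in\mathfrak{s}_c$, so $\mathrm{ad}(X_s)$ is semisimple with purely imaginary spectrum on all of $\mathfrak{g}$.

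The step I expect to be the main obstacle is proving $[X_s,\mathfrak{n}]=0$: fact (a) only yields $\mathrm{ad}(X_s)|_{\mathfrak{n}}=-\mathrm{ad}(X_r)|_{\mathfrak{n}}$, a cancellation between the Levi part $X_s$ and the radical part $X_r$ that is not visibly trivial. I would resolve it by passing to the representation $\pi\colon\mathfrak{g}\to\mathfrak{gl}(\mathfrak{n})$, $\pi(Y)=\mathrm{ad}(Y)|_{\mathfrak{n}}$. Here $\pi(\mathfrak{r}(\mathfrak{g}))$ is a solvable ideal of $\pi(\mathfrak{g})$, and since $\pi(\mathfrak{g})/\pi(\mathfrak{r}(\mathfrak{g}))$ is a quotient of the semisimple $\mathfrak{s}$ it is exactly $\mathrm{rad}(\pi(\mathfrak{g}))$, while $\pi(\mathfrak{s})$ is semisimple. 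Fact (a) reads $\pi(X_s)=-\pi(X_r)$, so $\pi(X_s)$ lies in $\pi(\mathfrak{s})\cap\pi(\mathfrak{r}(\mathfrak{g}))=\pi(\mathfrak{s})\cap\mathrm{rad}(\pi(\mathfrak{g}))$, a solvable ideal of the semisimple algebra $\pi(\mathfrak{s})$, hence $0$. Therefore $\mathrm{ad}(X_s)|_{\mathfrak{n}}=0$. This radical-meets-Levi vanishing is the Lie-algebraic heart of the matter, and is exactly where I expect to lean on Lemma \ref{lemma 10} and Corollary \ref{lemma 5} (and on the bounded-automorphism argument, Lemma 3 in \cite{Ti1964}).

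It then remains to deduce the two assertions. Since $[\mathfrak{g},\mathfrak{r}(\mathfrak{g})]\subseteq\mathfrak{n}$, the operator $\mathrm{ad}(X_s)$ maps $\mathfrak{r}(\mathfrak{g})$ into $\mathfrak{n}$, and by the previous step it kills $\mathfrak{n}$; hence $\mathrm{ad}(X_s)^2|_{\mathfrak{r}(\mathfrak{g})}=0$. As $\mathrm{ad}(X_s)|_{\mathfrak{r}(\mathfrak{g})}$ is the restriction of a semisimple operator, being square-zero it must vanish, so $[X_s,\mathfrak{r}(\mathfrak{g})]=0$; with $X_s\in\mathfrak{s}_c$ this gives $X_s\in\mathfrak{c}_{\mathfrak{s}_c}(\mathfrak{r}(\mathfrak{g}))$, the compact semisimple ideal of $\mathfrak{g}$ produced in Corollary \ref{lemma 5}, which is (1). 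For (2), combining $\mathrm{ad}(X)|_{\mathfrak{n}}=0$ with $\mathrm{ad}(X_s)|_{\mathfrak{n}}=0$ gives $[X_r,\mathfrak{n}]=0$; since $X_r\in\mathfrak{r}(\mathfrak{g})$ centralizes $\mathfrak{n}$ and $\mathfrak{c}_{\mathfrak{r}(\mathfrak{g})}(\mathfrak{n})\subseteq\mathfrak{n}$ (the centralizer being a two-step nilpotent ideal of $\mathfrak{g}$, Lemma \ref{lemma 10}), we conclude $X_r\in\mathfrak{n}$, whence $X_r\in\mathfrak{c}(\mathfrak{n})$, the center of $\mathfrak{n}$, which is (2).
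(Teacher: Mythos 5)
Your proof is correct, but it takes a genuinely different route from the paper's at both reduction steps, so a comparison is worthwhile. The paper never proves what your first paragraph does: that effectiveness upgrades a bounded Killing field to a bounded \emph{full} adjoint orbit $\mathrm{Ad}(G)X\subset\mathfrak{g}$, via evaluation at finitely many points whose isotropy subalgebras intersect trivially (your argument here is sound). Instead, the paper works throughout with the weaker notion of a bounded vector for a coset space (boundedness only modulo $\mathfrak{h}$), and must therefore assemble Lemmas \ref{lemma -1}, \ref{lemma 1}--\ref{lemma 3}, plus Lemma \ref{lemma 4} (an orbit on which $G$ still acts effectively) and Lemma \ref{lemma 0} (a reductive complement $\mathfrak{m}\supset\mathfrak{n}(\mathfrak{g})$) so that the polynomial $\mathrm{pr}_{\mathfrak{m}}\bigl(\mathrm{Ad}(\exp tY)X\bigr)$ survives the projection; your version of the same polynomial trick runs directly in $\mathfrak{g}$. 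Likewise, where the paper proves Claim 1 by passing to the group quotient $G/HR\cong S/H_S$ with a submersion metric, you project the bounded orbit to $\mathfrak{g}/\mathfrak{r}(\mathfrak{g})\cong\mathfrak{s}$; and your intersection argument $\pi(\mathfrak{s})\cap\pi(\mathfrak{r}(\mathfrak{g}))=0$ is a cleaner rederivation of exactly the linear splitting (\ref{011}) in Lemma \ref{lemma 10}. Two points need slightly more care. First, Theorem \ref{theorem 1} as stated is geometric, so to apply its ``algebraic core'' to the bounded orbit $\mathrm{Inn}(\mathfrak{s})X_s$ you should either realize it geometrically ($X_s$ is a bounded vector for $\mathrm{Inn}(\mathfrak{s})/\{e\}$, hence by Lemma \ref{lemma 1} a bounded Killing field for a left-invariant metric on $\mathrm{Inn}(\mathfrak{s})$, which acts on itself effectively) or argue algebraically (each noncompact simple factor of $\mathfrak{s}$ is generated by its $\mathrm{ad}$-nilpotent elements, all of which $X_s$ centralizes by your polynomial argument). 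Second, your deduction of $[X_s,\mathfrak{r}(\mathfrak{g})]=0$ leans on $\mathrm{ad}_{\mathfrak{g}}(X_s)$ being semisimple on \emph{all} of $\mathfrak{g}$; this is true but not automatic: it uses $X_s\in\mathfrak{s}_c$ together with the fact that any finite-dimensional real representation of a compact-type semisimple algebra integrates (Weyl) to a compact group and so acts by skew-symmetric, hence semisimple, operators. If you want to avoid that input, the Jacobi-identity computation inside Lemma \ref{lemma 10}, which yields $\mathfrak{c}_{\mathfrak{s}}(\mathfrak{n}(\mathfrak{g}))=\mathfrak{c}_{\mathfrak{s}}(\mathfrak{r}(\mathfrak{g}))$ with no compactness hypothesis, does the same job. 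What each approach buys: yours is shorter and almost purely algebraic after the first step; the paper's coset-space machinery is heavier here, but it is exactly the apparatus reused later to prove Theorem \ref{theorem 3}, where one genuinely must work modulo the isotropy.
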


Here the centralizer $\mathfrak{c}_\mathfrak{a}(\mathfrak{b})$ of the subalgebra $\mathfrak{b}\subset\mathfrak{g}$ in  the subalgebra $\mathfrak{a}\subset\mathfrak{g}$
is defined as $\mathfrak{c}_\mathfrak{a}(\mathfrak{b})=
\{u\in\mathfrak{a}\,|\,[u,\mathfrak{b}]=0\}$.
In particular, the center
$\mathfrak{c}(\mathfrak{a})$ of $\mathfrak{a}\subset\mathfrak{g}$ coincides with~$\mathfrak{c}_\mathfrak{a}(\mathfrak{a})$.
\smallskip

Theorem \ref{theorem 2} helps us find more algebraic properties for bounded Killing vector fields. In particular, $X=X_r+X_s$ is
an abstract Jordan decomposition which is irrelevant to the choice of the Levi subalgebra $\mathfrak{s}$, and the eigenvalues of $\mathrm{ad}(X)$
coincide with those of $\mathrm{ad}(X_s)$, which are all imaginary (see Theorem \ref{main-cor}). As a direct corollary, we have proved the following spectral property.

\begin{corollary}
Let $M$ be a connected Riemannian manifold on which the connected
Lie group $G$ acts effectively and isometrically. Assume that $X\in\mathfrak{g}$ defines a bounded Killing vector field. Then all eigenvalues of
$\mathrm{ad}(X):\mathfrak{g}\rightarrow\mathfrak{g}$ are imaginary.
\end{corollary}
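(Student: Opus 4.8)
The plan is to deduce the corollary directly from Theorem \ref{theorem 2}, together with the structure of the abstract Jordan decomposition recorded in Theorem \ref{main-cor}; since the substantial work already lies in those results, the corollary should follow by a short spectral argument. Fix a Levi decomposition $\mathfrak{g}=\mathfrak{r}(\mathfrak{g})+\mathfrak{s}$ with $\mathfrak{s}=\mathfrak{s}_c\oplus\mathfrak{s}_{nc}$, and write $X=X_r+X_s$ accordingly. By Theorem \ref{theorem 2}, $X_s$ lies in the compact semi-simple ideal $\mathfrak{k}:=\mathfrak{c}_{\mathfrak{s}_c}(\mathfrak{r}(\mathfrak{g}))$, and $X_r$ lies in $\mathfrak{c}(\mathfrak{n})\subseteq\mathfrak{n}(\mathfrak{g})$.

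First I would record three facts about the commuting endomorphisms $\mathrm{ad}(X_r)$ and $\mathrm{ad}(X_s)$. They indeed commute: $X_s$ centralizes $\mathfrak{r}(\mathfrak{g})$ and $X_r\in\mathfrak{r}(\mathfrak{g})$, so $[X_s,X_r]=0$. Next, $\mathrm{ad}(X_r):\mathfrak{g}\to\mathfrak{g}$ is nilpotent: since the nilradical is a nilpotent ideal, $\mathrm{ad}(X_r)(\mathfrak{g})=[X_r,\mathfrak{g}]\subseteq\mathfrak{n}(\mathfrak{g})$ and $\mathrm{ad}(X_r)$ acts nilpotently inside the nilpotent algebra $\mathfrak{n}(\mathfrak{g})$, so a bounded power of $\mathrm{ad}(X_r)$ annihilates all of $\mathfrak{g}$. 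Finally, the eigenvalues of $\mathrm{ad}(X_s):\mathfrak{g}\to\mathfrak{g}$ are imaginary: because $\mathfrak{k}$ is compact and semi-simple, $\mathrm{ad}(X_s)|_{\mathfrak{k}}$ is skew-symmetric with respect to an $\mathrm{ad}(\mathfrak{k})$-invariant inner product on $\mathfrak{k}$ (e.g.\ the negative of the Killing form), hence has imaginary spectrum; and since $\mathfrak{k}$ is an ideal, $\mathrm{ad}(X_s)(\mathfrak{g})\subseteq\mathfrak{k}$, so every nonzero eigenvalue of $\mathrm{ad}(X_s)$ on $\mathfrak{g}$ is already attained on $\mathfrak{k}$ (an eigenvector for $\lambda\neq 0$ equals $\lambda^{-1}\mathrm{ad}(X_s)$ applied to itself, hence lies in $\mathfrak{k}$), while $0$ is imaginary as well.

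The final step is a short piece of linear algebra. We have $\mathrm{ad}(X)=\mathrm{ad}(X_s)+\mathrm{ad}(X_r)$, a sum of commuting endomorphisms whose second summand is nilpotent. Simultaneously triangularizing this commuting pair over $\mathbb{C}$, the nilpotent operator $\mathrm{ad}(X_r)$ acquires zero diagonal, so the diagonal of $\mathrm{ad}(X)$ coincides with that of $\mathrm{ad}(X_s)$; thus $\mathrm{ad}(X)$ and $\mathrm{ad}(X_s)$ have the same spectrum. By the third fact this spectrum is imaginary, which proves the corollary. Equivalently, one may simply invoke Theorem \ref{main-cor}, which identifies $X=X_r+X_s$ with the abstract Jordan decomposition and asserts that $\mathrm{ad}(X)$ and $\mathrm{ad}(X_s)$ share the same eigenvalues.

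I do not expect a genuine obstacle here, precisely because Theorem \ref{theorem 2} supplies exactly the structural input required; the corollary is essentially a reformulation of the spectral half of Theorem \ref{main-cor}. The only points needing care are the standard structure-theoretic fact that nilradical elements act nilpotently on the whole of $\mathfrak{g}$, and the observation that the compactness of the ideal $\mathfrak{k}$ forces imaginary spectrum even for the action of $\mathrm{ad}(X_s)$ on $\mathfrak{g}$ rather than merely on $\mathfrak{k}$.
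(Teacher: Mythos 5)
Your proposal is correct and essentially reproduces the paper's own argument: the paper likewise deduces the corollary from Theorem \ref{theorem 2} via the observations that $\mathrm{ad}(X_r)$ is nilpotent, $\mathrm{ad}(X_s)$ has purely imaginary spectrum, the two commute since $[X_r,X_s]=0$, and a simultaneous triangularization shows that $\mathrm{ad}(X)$ and $\mathrm{ad}(X_s)$ have the same eigenvalues (observations (1)--(4) leading to Theorem \ref{main-cor}). The only difference is that you spell out details the paper treats as obvious, such as the skew-symmetry of $\mathrm{ad}(X_s)$ with respect to an invariant inner product on the compact ideal $\mathfrak{c}_{\mathfrak{s}_c}(\mathfrak{r}(\mathfrak{g}))$, the eigenvector argument passing from $\mathfrak{g}$ to that ideal, and the inclusion $[\mathfrak{r}(\mathfrak{g}),\mathfrak{g}]\subset\mathfrak{n}(\mathfrak{g})$ underlying the nilpotency of $\mathrm{ad}(X_r)$.
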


When $M=G/H$ is a Riemannian homogeneous space on which the connected Lie group $G$ acts effectively, we can apply Theorem \ref{theorem 2}
to prove the following theorem, which completely determine all bounded vectors in $\mathfrak{g}$ for $G/H$, or equivalently all bounded Killing
vector fields induced by vectors in $\mathfrak{g}$
(see Section 2.3 for the notion of bounded vectors for a coset space, and Lemma \ref{lemma 1} for the equivalence).
\begin{theorem}\label{theorem 3}
Let $G/H$ be a Riemannian homogeneous space on which the connected Lie group $G$ acts effectively. Let $\mathfrak{r}(\mathfrak{g})$, $\mathfrak{n}(\mathfrak{g})$ and $\mathfrak{s}=\mathfrak{s}_c\oplus\mathfrak{s}_{nc}$ be the radical, the nilradical, and the Levi subalgebra  respectively.
Then the space of
all bounded vectors in $\mathfrak{g}$ for $G/H$ is a compact subalgebra. Its
semi-simple part coincides with the ideal
$\mathfrak{c}_{\mathfrak{s}_c}(\mathfrak{r}(\mathfrak{g}))$ of~$\mathfrak{g}$,
which is independent of the choice of the Levi subalgebra $\mathfrak{s}$,
and its Abelian part $\mathfrak{v}$
is contained in $\mathfrak{c}(\mathfrak{n}(\mathfrak{g}))$, which coincides with
the sum of $\mathfrak{c}_{\mathfrak{c}(\mathfrak{r}(\mathfrak{g}))}
(\mathfrak{s}_{nc})$ and all two-dimensional
irreducible representations of $\mathrm{ad}(\mathfrak{r}(\mathfrak{g}))$ in
$\mathfrak{c}_{\mathfrak{c}(\mathfrak{n}(\mathfrak{g}))}
(\mathfrak{s}_{nc})$ corresponding to nonzero imaginary weights, i.e. $\mathbb{R}$-linear functionals
$\lambda:\mathfrak{r}\rightarrow\mathfrak{r}/\mathfrak{n}
\rightarrow\mathbb{R}\sqrt{-1}$.
\end{theorem}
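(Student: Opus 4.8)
The plan is to prove Theorem \ref{theorem 3} by establishing the two inclusions that together identify the space $\mathfrak{b}$ of all bounded vectors for $G/H$. First I would invoke Lemma \ref{lemma 1} to translate the statement entirely into Lie-algebraic terms: a vector $X\in\mathfrak{g}$ is bounded for $G/H$ if and only if it induces a bounded Killing vector field, so $\mathfrak{b}$ is exactly the set of such $X$. Then, applying Theorem \ref{theorem 2} to each $X\in\mathfrak{b}$, I get the decomposition $X=X_r+X_s$ with $X_s\in\mathfrak{c}_{\mathfrak{s}_c}(\mathfrak{r}(\mathfrak{g}))$ and $X_r\in\mathfrak{c}(\mathfrak{n}(\mathfrak{g}))$. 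This shows at once that $\mathfrak{b}\subset \mathfrak{c}_{\mathfrak{s}_c}(\mathfrak{r}(\mathfrak{g}))\oplus\mathfrak{c}(\mathfrak{n}(\mathfrak{g}))$, and in particular the semi-simple components of bounded vectors all lie in the compact ideal $\mathfrak{c}_{\mathfrak{s}_c}(\mathfrak{r}(\mathfrak{g}))$.

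The next step is to show $\mathfrak{b}$ is a linear subspace and to pin down its semi-simple part. Since $\mathfrak{c}_{\mathfrak{s}_c}(\mathfrak{r}(\mathfrak{g}))$ is a compact semi-simple ideal, it generates a compact subgroup acting on $G/H$, so every vector in it induces a Killing field with relatively compact orbit closure and is therefore bounded; hence $\mathfrak{c}_{\mathfrak{s}_c}(\mathfrak{r}(\mathfrak{g}))\subset\mathfrak{b}$. Combined with the previous inclusion on the $\mathfrak{s}$-part, this forces the semi-simple part of $\mathfrak{b}$ to equal $\mathfrak{c}_{\mathfrak{s}_c}(\mathfrak{r}(\mathfrak{g}))$ exactly. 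Its independence of the choice of $\mathfrak{s}$ follows from the uniqueness of the abstract Jordan decomposition noted after Theorem \ref{theorem 2} (Theorem \ref{main-cor}), since $X_s$ does not depend on $\mathfrak{s}$. To see $\mathfrak{b}$ is a subalgebra and compact, I would argue that $\mathfrak{b}$ is closed under addition — using that the sum of two bounded Killing fields is bounded and that boundedness on $G/H$ is governed by the quadratic length function — and that the bracket of two bounded vectors remains bounded; compactness then follows from the imaginary-eigenvalue property of $\mathrm{ad}(X)$ for all $X\in\mathfrak{b}$ together with standard structure theory of compact subalgebras.

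The substantive part, and the step I expect to be the main obstacle, is the precise description of the Abelian part $\mathfrak{v}$ of $\mathfrak{b}$ inside $\mathfrak{c}(\mathfrak{n}(\mathfrak{g}))$. Here I would decompose $\mathfrak{c}_{\mathfrak{c}(\mathfrak{n}(\mathfrak{g}))}(\mathfrak{s}_{nc})$ under the action of $\mathrm{ad}(\mathfrak{r}(\mathfrak{g}))$. Because $\mathfrak{r}(\mathfrak{g})$ acts on $\mathfrak{c}(\mathfrak{n})$ with the nilradical $\mathfrak{n}(\mathfrak{g})$ acting trivially (as $\mathfrak{n}$ is the nilradical and these vectors are central in $\mathfrak{n}$), the action factors through $\mathfrak{r}/\mathfrak{n}$, which is Abelian; hence it decomposes into one-dimensional trivial pieces $\mathfrak{c}_{\mathfrak{c}(\mathfrak{r}(\mathfrak{g}))}(\mathfrak{s}_{nc})$ and two-dimensional irreducible pieces carrying weights $\lambda:\mathfrak{r}\rightarrow\mathfrak{r}/\mathfrak{n}\rightarrow\mathbb{R}\sqrt{-1}$. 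The crux is then to show that a vector $X_r$ in such a piece induces a bounded Killing field precisely when the corresponding weight $\lambda$ is imaginary (equivalently, that $\mathrm{ad}(X_r)$ acting across these weight spaces stays bounded), and to rule out any contribution from weights with nonzero real part. This requires a careful analysis of how boundedness of the length of the induced Killing field interacts with the growth of $\exp(t\,\mathrm{ad}(X_r))$ along the homogeneous space, in the spirit of the estimates from Lemma 2.3 and Lemma 2.4 in \cite{Wo2017}; I would handle it by reducing to orbits in directions transverse to $\mathfrak{h}$ and showing that a real part in the weight produces unbounded length, while purely imaginary weights yield relatively compact one-parameter motions and hence bounded length. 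Assembling these weight-space computations, together with the already-established semi-simple part, yields the claimed full description of $\mathfrak{b}$.
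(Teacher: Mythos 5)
Your first two paragraphs essentially reproduce the paper's proof of Theorem \ref{main-cor-2}: Theorem \ref{theorem 2} gives the inclusion of the space of bounded vectors (your $\mathfrak{b}$) into $\mathfrak{c}_{\mathfrak{s}_c}(\mathfrak{r}(\mathfrak{g}))\oplus\mathfrak{c}(\mathfrak{n}(\mathfrak{g}))$, and the compact-orbit argument is exactly Lemma \ref{lemma 6}. Two remarks there. The paper gets the subalgebra property more cheaply than your bracket argument: $\mathfrak{b}$ is a linear subspace preserved by $\mathrm{Ad}(G)$, hence an \emph{ideal} of $\mathfrak{g}$. And your justification of compactness is invalid as a general principle: ``every $\mathrm{ad}(X)$, $X\in\mathfrak{b}$, has purely imaginary eigenvalues'' does not make $\mathfrak{b}$ compactly imbedded (every element of a Heisenberg algebra has only zero eigenvalues, yet it is not a compact subalgebra); compactness must be read off from the final structural description, as the paper does.

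The genuine gaps are all in your treatment of the Abelian part $\mathfrak{v}$, which is the substance of Theorem \ref{main-cor-3}. (i) You decompose $\mathfrak{c}_{\mathfrak{c}(\mathfrak{n})}(\mathfrak{s}_{nc})$ under $\mathrm{ad}(\mathfrak{r}(\mathfrak{g}))$, but you never prove that $\mathfrak{v}$ lies inside $\mathfrak{c}_{\mathfrak{c}(\mathfrak{n})}(\mathfrak{s}_{nc})$ at all, i.e.\ that a bounded vector of $\mathfrak{c}(\mathfrak{n}(\mathfrak{g}))$ must commute with $\mathfrak{s}_{nc}$; this is not automatic and is one of the things to be proved. (ii) Your asserted a priori decomposition of $\mathfrak{c}_{\mathfrak{c}(\mathfrak{n})}(\mathfrak{s}_{nc})$ into one-dimensional trivial pieces plus two-dimensional pieces with imaginary weights is false in general: the Abelian algebra $\mathfrak{a}=\mathfrak{r}/\mathfrak{n}$ can act non-semisimply and with weights of nonzero real part (take $\mathfrak{g}=\mathrm{span}\{u,x\}$ with $[u,x]=x$; then $\mathfrak{s}_{nc}=0$ and $\mathfrak{c}_{\mathfrak{c}(\mathfrak{n})}(\mathfrak{s}_{nc})=\mathbb{R}x$ carries the real weight $1$). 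That semisimplicity with purely imaginary weights holds only on $\mathfrak{v}$ and is precisely what must be established; your outline is even internally inconsistent on this point, since you later propose to ``rule out'' real parts that your decomposition has already excluded. (iii) Most seriously, your converse direction is a non sequitur: bounded orbits $\mathrm{Ad}(\exp(tu))X$ along each one-parameter subgroup do not, without further argument, give a uniform bound for $\mathrm{Ad}(G)X$ over the whole noncompact group $G$. The paper resolves (i)--(iii) in one stroke with Lemma \ref{lemma 7}: since $\mathfrak{v}\subset\mathfrak{c}(\mathfrak{n}(\mathfrak{g}))\subset\mathfrak{m}$, boundedness of a basis yields the two-sided estimate $C^{-1}\|v\|\leq\|\mathrm{Ad}(g)v\|\leq C\|v\|$ on $\mathfrak{v}$, so $\mathrm{Ad}_{\mathfrak{v}}(G)$ has compact closure in $\mathrm{GL}(\mathfrak{v})$; then $\mathrm{ad}_{\mathfrak{v}}$ maps $\mathfrak{s}_c\oplus\mathfrak{s}_{nc}\oplus\mathfrak{a}$ into a compact subalgebra, which kills $\mathfrak{s}_{nc}$ (a noncompact semisimple algebra has no nonzero homomorphism into a compact one) and forces the $\mathfrak{a}$-action to be semisimple with imaginary eigenvalues; and conversely, on the candidate space $\mathfrak{v}'$ one has $\mathrm{Ad}_{\mathfrak{v}'}(G)=\mathrm{Ad}_{\mathfrak{v}'}(S_c)\cdot\mathrm{Ad}_{\mathfrak{v}'}(R/N)$, a compact group times a commuting torus closure, hence precompact --- this is the global step your one-parameter reasoning cannot deliver. (A smaller slip: boundedness of $X_r$ concerns the growth of $\exp(t\,\mathrm{ad}(u))X_r$ for $u\in\mathfrak{g}$, not of $\exp(t\,\mathrm{ad}(X_r))$.) Your weight-by-weight strategy could be repaired for the forward inclusion by working with generalized weight spaces and growth estimates, but the converse inclusion still requires a group-level compactness argument of the paper's kind.
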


Theorem \ref{theorem 3} is a summarization of Theorem \ref{main-cor-2} and Theorem \ref{main-cor-3}.

Note that $\mathfrak{c}_{\mathfrak{s}_{c}}(\mathfrak{r}(\mathfrak{g}))$ is a
compact semi-simple summand in
the Lie algebra direct sum decomposition of $\mathfrak{g}$,
which can be easily determined. For the other, the Abelian factor~$\mathfrak{v}$,
we propose a theoretic algorithm which
explicitly describes all bounded vectors in
$\mathfrak{c}(\mathfrak{n}(\mathfrak{g}))$.
\smallskip

Theorem \ref{theorem 3} provides a simple and self contained proof of the following theorem.

\begin{theorem}\label{theorem 4}
The space of bounded vectors in $\mathfrak{g}$ for a Riemannian
homogeneous space $G/H$ on which the connected Lie group $G$ acts
effectively is irrelevant to the choice of~ $H$.
\end{theorem}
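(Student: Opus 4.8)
The plan is to read off the statement directly from the explicit description of the space of bounded vectors supplied by Theorem \ref{theorem 3}. The decisive feature of that description is that it is manufactured entirely from the abstract Lie algebra $\mathfrak{g}$: the subgroup $H$, and indeed the particular $G$-invariant metric on $G/H$, never enter the final answer.

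Concretely, I would fix once and for all a single Levi decomposition $\mathfrak{g}=\mathfrak{r}(\mathfrak{g})+\mathfrak{s}$ with $\mathfrak{s}=\mathfrak{s}_c\oplus\mathfrak{s}_{nc}$, and set
\[
\mathfrak{b}=\mathfrak{c}_{\mathfrak{s}_c}(\mathfrak{r}(\mathfrak{g}))\oplus\mathfrak{v},
\]
where $\mathfrak{v}$ is the sum of $\mathfrak{c}_{\mathfrak{c}(\mathfrak{r}(\mathfrak{g}))}(\mathfrak{s}_{nc})$ and of all two-dimensional irreducible $\mathrm{ad}(\mathfrak{r}(\mathfrak{g}))$-submodules of $\mathfrak{c}_{\mathfrak{c}(\mathfrak{n}(\mathfrak{g}))}(\mathfrak{s}_{nc})$ attached to nonzero imaginary weights. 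Each ingredient of $\mathfrak{b}$ — the radical, the nilradical, the summands $\mathfrak{s}_c$ and $\mathfrak{s}_{nc}$, the various centralizers, and the weight-space decomposition — is determined by $\mathfrak{g}$ alone. Invoking Theorem \ref{theorem 3} for an arbitrary closed subgroup $H$ for which $G$ acts effectively on $G/H$, and for an arbitrary $G$-invariant metric, the space of bounded vectors in $\mathfrak{g}$ for $G/H$ is exactly this $\mathfrak{b}$. Since I use the one fixed Levi decomposition for every $H$, the right-hand side is literally the same subspace $\mathfrak{b}$ in all cases, so the spaces of bounded vectors for $G/H_1$ and $G/H_2$ coincide; this is the assertion of Theorem \ref{theorem 4}.

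I do not expect any genuine obstacle to remain once Theorem \ref{theorem 3} is in hand: all of the labor is concentrated in that theorem, and the passage here is the formal observation that a formula carrying no $H$-dependent data is $H$-independent. The one point I would take care to address is that the appeal to a Levi subalgebra should not covertly reintroduce a dependence on $H$. Holding a single $\mathfrak{s}$ fixed across all choices of $H$ removes this worry outright; and if one instead prefers to allow distinct Levi factors for distinct $H$, the independence-of-$\mathfrak{s}$ clause of Theorem \ref{theorem 3} — valid for the semisimple summand $\mathfrak{c}_{\mathfrak{s}_c}(\mathfrak{r}(\mathfrak{g}))$, and for $\mathfrak{v}$ regarded as the intrinsic Abelian part (center) of the compact bounded-vector algebra — yields the same conclusion.
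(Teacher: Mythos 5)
Your proposal is correct and is essentially the paper's own argument: the paper derives Theorem \ref{theorem 4} directly from Theorem \ref{theorem 3}, observing that the explicit description of the bounded vectors there is built purely from Lie-algebraic data of $\mathfrak{g}$ (radical, nilradical, Levi factor, centralizers, weights) with no reference to $H$ or to the invariant metric. Your additional care about fixing one Levi decomposition across all choices of $H$ is a sensible way to make that observation airtight, and is consistent with the paper's remark that $\mathfrak{c}_{\mathfrak{s}_c}(\mathfrak{r}(\mathfrak{g}))$ lies in every Levi subalgebra.
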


Notice that the arguments in \cite{Ti1964} indicate that the
subset of all bounded isometries in $G$ is irrelevant to the
choice of $H$. So Theorem \ref{theorem 4} can also be
proved by
J. Tits' Theorem 1 in \cite{Ti1964}, which implies that all bounded isometries in $G$ are generated by bounded vectors in $\mathfrak{g}$.
\smallskip

Meanwhile,
Theorem \ref{theorem 3} provides an alternative explanation why in some special cases,
the much stronger constant length condition for Killing vector fields or Clifford--Wolf
condition for translations may be implied by the boundedness condition \cite{MMW,Wo2017}.
\smallskip

At the end, we remark that all lemmas, theorems and corollaries
are still valid when~$M$ is a Finsler manifold. The Finsler metric on a smooth manifold is a natural
generalization of the Riemannian metric, which satisfies the properties of the smoothness, positiveness, homogeneity of degree one and strong convexity,
but not the quadratic property in general. See \cite{BCS2000} for
its precise definition and more details. The proofs for all the results of this work in the Finsler context only
need an add-on from the following well-known fact.
The isometry group of a Finsler manifold is a Lie group~\cite{DH2002} with a compact isotropy subgroup at any point.
\smallskip

This work is organized as following. In Section 2, we summarize some basic knowledge on Lie theory and homogeneous geometry which
are necessary for later discussions. We define the bounded vector in $\mathfrak{g}$ for a smooth coset space $G/H$ and discuss its basic properties and relation to the bounded Killing vector field. In Section 3, we prove
Theorem \ref{theorem 1} and Theorem \ref{theorem 2}. In Section 4, we discuss two applications of Theorem \ref{theorem 2}. One is to prove the Jordan decomposition and spectral properties for bounded Killing vector fields. The other is to study the Lie algebra of all bounded vectors in $\mathfrak{g}$ for a Riemannian homogeneous
space $G/H$, on which $G$ acts effectively. We will provide explicit description for this compact Lie algebra and completely determine all bounded Killing vector fields for a Riemannian homogeneous space.

\newpage

\section{Preliminaries in Lie theory and homogeneous geometry}
\subsection{Some fundamental facts in Lie theory}
\label{subsection-2.1}
Let $\mathfrak{g}$ be a real Lie algebra. Its {\it radical}
$\mathfrak{r}(\mathfrak{g})$ and {\it nilradical} (or {\it nilpotent radical})
$\mathfrak{n}(\mathfrak{g})$ are the unique largest solvable and nilpotent ideals of $\mathfrak{g}$ respectively.
By Corollary 5.4.15 in \cite{JK}, we have
$$
[\mathfrak{r}(\mathfrak{g}),\mathfrak{r}(\mathfrak{g})]\subset
[\mathfrak{r}(\mathfrak{g}),\mathfrak{g}]
\subset\mathfrak{n}(\mathfrak{g})\subset\mathfrak{r}(\mathfrak{g}).
$$

By Levi's theorem,
we can find a semi-simple subalgebra $\mathfrak{s}\subset\mathfrak{g}$, which is the complement of the radical $\mathfrak{r}(\mathfrak{g})$ in $\mathfrak{g}$.
We will further decompose the $\mathfrak{s}$ as
$\mathfrak{s}=\mathfrak{s}_c\oplus\mathfrak{s}_{nc}$,
where~$\mathfrak{s}_c$ and $\mathfrak{s}_{nc}$ are the compact and
noncompact parts of $\mathfrak{s}$
respectively.
We will call
\begin{equation}\label{007}
\mathfrak{g}=\mathfrak{r}(\mathfrak{g})+\mathfrak{s}
\end{equation}
a {\it Levi decomposition}, and the semi-simple subalgebra
$\mathfrak{s}$ in (\ref{007}) a {\it Levi subalgebra}.
By Malcev's Theorem (see Theorem 5.6.13 in \cite{JK}), the Levi subalgebra $\mathfrak{s}$
is unique up to $\mathrm{Ad}\bigl(\exp([\mathfrak{g},\mathfrak{r}(\mathfrak{g})])\bigr)$-actions.

If $G$ is a connected Lie group
with $\mathrm{Lie}(G)=\mathfrak{g}$,
$\mathfrak{r}(\mathfrak{g})$ and $\mathfrak{n}(\mathfrak{g})$ generate closed solvable and nilpotent normal subgroups respectively.

A subalgebra $\mathfrak{k}\subset\mathfrak{g}$ is called {\it compactly imbedded} if after taking closure it
generates a compact subgroup
in the  inner automorphism group $\mathrm{Inn}(\mathfrak{g})=G/Z(G)$.
A vector
$X\in\mathfrak{g}$ is called a {\it compact vector} if $\mathbb{R}X$ is a
compactly imbedded subalgebra of $\mathfrak{g}$.

Assume that $G$ is a connected Lie group with $\mathrm{Lie}(G)=\mathfrak{g}$, and $H$ the connected subgroup
generated by a subalgebra $\mathfrak{h}$.
Obviously if $H$ is compact, then any subalgebra of $\mathfrak{h}$ is compactly imbedded.
The converse statement is not true in general. We call a subgroup $H$ of $G$ {\it compactly imbedded}
if the closure of $\mathrm{Ad}_\mathfrak{g}(H)\subset \mathrm{Aut}(\mathfrak{g})$ is compact.

Any compactly imbedded
subalgebra $\mathfrak{h}$ of $\mathfrak{g}$ is contained in
 a {\it maximal compactly imbedded} subalgebra.
A maximal compactly imbedded subalgebra can be presented as the pre-image in $\mathfrak{g}$ for the subalgebra of
$\mathfrak{g}/\mathfrak{c}(\mathfrak{g})$ generating a
maximal compact subgroup in $G/Z(G)$.
As an immediate corollary for the conjugation theorem for maximal
compact connected subgroups (see Theorem 14.1.3 in \cite{JK}), the maximal compactly imbedded subalgebra is unique up to
$\mathrm{Ad}(G)$-actions.

\subsection{Homogeneous metric and reductive decomposition}

Let $M$ be a Riemannian homogeneous space on which the connected Lie group $G$ acts effectively and isometrically. The effectiveness implies
that $G$ is a subgroup of the isometry group $I(M)$.
When $G$ is a closed subgroup of $I(M)$, then $H$ is compact.
When $G$ is not closed in~$I(M)$, then we still have the following consequence from the discussion in
\cite{MM1988}.

\begin{lemma}\label{lemma -1}
Let $M$ be a Riemannian homogeneous space on
which the connected Lie group $G$ acts effectively and isometrically. Then the isotropy subgroup $H$
at any $x\in M$ and its Lie algebra $\mathfrak{h}$
are compactly imbedded.
\end{lemma}

To be more self contained, we propose a direct proof here.

\begin{proof}
Let $\overline{G}$ be the closure of $G$ in $I(M)$ and $\overline{H}$ be the isotropy subgroup at $x\in M$ for
the $\overline{G}$-action on $M$. Then $\overline{H}$ is
compact. On the other hand, the property that
$\mathrm{Ad}(G)$-actions preserve $\mathfrak{g}$ can be
passed by continuity to $\overline{G}$, i.e. $\mathfrak{g}$
is an ideal of $\overline{\mathfrak{g}}=\mathrm{Lie}(\overline{G})$.
Denote $\mathrm{Ad}_\mathfrak{g}$ the restriction of $\mathrm{Ad}(\overline{G})$-actions from $\mathfrak{g}$ to
$\mathfrak{g}$, then the subgroup $\mathrm{Ad}_\mathfrak{g}(H)$ of~$\mathrm{Aut}(\mathfrak{g})$
(which is contained in $\mathrm{Inn}(\mathfrak{g})$ because of the connectedness of $G$) is contained
in the compact subgroup $\mathrm{Ad}_\mathfrak{g}(\overline{H})$.
From this argument, we also see that both $H$ and $\mathfrak{h}$ are compactly
imbedded.
\end{proof}
\smallskip

Now we further assume $M=G/H$ is a Riemannian homogeneous space.

The $H$-action on $T_o(G/H)$ at $o=eH$ is called the {\it isotropy action}.
A linear direct sum decomposition $\mathfrak{g}=\mathfrak{h}+\mathfrak{m}$, where $\mathrm{Lie}(H)=\mathfrak{h}$, is called a {\it reductive decomposition} for $G/H$, if it is $\mathrm{Ad}(H)$-invariant.
We can identify $T_o(G/H)$ with $\mathfrak{m}$ such that the isotropy action coincides with the $\mathrm{Ad}(H)$-action on $\mathfrak{m}$.

Generally speaking, there exist many different reductive decompositions for a Riemannian homogeneous space $G/H$.  A canonical one can be constructed by the following lemma, which summarizes Lemma 2 and Remark 1 in \cite{Ni2017}.

\begin{lemma}\label{lemma 0}
Let $G/H$ be a Riemannian homogeneous space on which $G$ acts
effectively. Then we have the following:
\begin{enumerate}
\item The restriction of the Killing form $B_\mathfrak{g}$ of $\mathfrak{g}$ to $\mathfrak{h}$ is negative definite;
\item The $B_\mathfrak{g}$-orthogonal decomposition $\mathfrak{g}=\mathfrak{h}+\mathfrak{m}$ is reductive and $\mathfrak{n}(\mathfrak{g})\subset\mathfrak{m}$.
\end{enumerate}
\end{lemma}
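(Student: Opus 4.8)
The plan is to handle the two assertions in order: deduce (1) from the compact imbeddedness of $\mathfrak{h}$ furnished by Lemma \ref{lemma -1} together with the effectiveness of the action, and then define $\mathfrak{m}$ to be the $B_\mathfrak{g}$-orthogonal complement of $\mathfrak{h}$ and check reductivity and the inclusion $\mathfrak{n}(\mathfrak{g})\subset\mathfrak{m}$.

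For (1), Lemma \ref{lemma -1} tells us that the closure of $\mathrm{Ad}_\mathfrak{g}(H)$ in $\mathrm{Aut}(\mathfrak{g})$ is compact, so averaging an arbitrary inner product over the Haar measure of this group produces an inner product $\langle\cdot,\cdot\rangle$ on $\mathfrak{g}$ for which every $\mathrm{ad}(Y)$ with $Y\in\mathfrak{h}$ is skew-symmetric. Then $\mathrm{ad}(Y)^2$ is negative semi-definite, so $B_\mathfrak{g}(Y,Y)=\mathrm{tr}(\mathrm{ad}(Y)^2)\le 0$, with equality exactly when $\mathrm{ad}(Y)=0$, i.e. $Y\in\mathfrak{c}(\mathfrak{g})$. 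Effectiveness forces $\mathfrak{h}$ to contain no nonzero ideal of $\mathfrak{g}$, and in particular $\mathfrak{h}\cap\mathfrak{c}(\mathfrak{g})=0$, because a nonzero vector there would span a central line, hence an ideal, inside $\mathfrak{h}$. Thus $B_\mathfrak{g}(Y,Y)<0$ for every nonzero $Y\in\mathfrak{h}$, which is (1).

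For (2), negative definiteness makes $B_\mathfrak{g}|_\mathfrak{h}$ nondegenerate, so $\mathfrak{h}\cap\mathfrak{h}^\perp=0$ and a dimension count yields $\mathfrak{g}=\mathfrak{h}\oplus\mathfrak{m}$ with $\mathfrak{m}=\mathfrak{h}^\perp$. Because $B_\mathfrak{g}$ is $\mathrm{Ad}(G)$-invariant, hence $\mathrm{Ad}(H)$-invariant, and $\mathfrak{h}$ is $\mathrm{Ad}(H)$-invariant, the complement $\mathfrak{m}$ is $\mathrm{Ad}(H)$-invariant too, which is precisely reductivity. What remains, and what I expect to be the main obstacle, is the inclusion $\mathfrak{n}(\mathfrak{g})\subset\mathfrak{m}$, equivalently $B_\mathfrak{g}(\mathfrak{n}(\mathfrak{g}),\mathfrak{h})=0$.

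To prove $B_\mathfrak{g}(X,Y)=0$ for $X\in\mathfrak{n}:=\mathfrak{n}(\mathfrak{g})$ and $Y\in\mathfrak{h}$, I would first note that, $\mathfrak{n}$ being a nilpotent ideal, the inclusion $[\mathfrak{n},\mathfrak{g}]\subset\mathfrak{n}$ gives $\mathrm{ad}(X)^k\mathfrak{g}\subset\mathfrak{n}^{(k)}$ along the lower central series of $\mathfrak{n}$, so $\mathrm{ad}(X)$ is nilpotent and maps $\mathfrak{g}$ into $\mathfrak{n}$; moreover $\mathrm{ad}(Y)$ preserves the ideal $\mathfrak{n}$. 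Hence $\mathrm{ad}(X)\mathrm{ad}(Y)$ has image in $\mathfrak{n}$ and $B_\mathfrak{g}(X,Y)=\mathrm{tr}_\mathfrak{g}(\mathrm{ad}(X)\mathrm{ad}(Y))=\mathrm{tr}_\mathfrak{n}\bigl((\mathrm{ad}(X)|_\mathfrak{n})(\mathrm{ad}(Y)|_\mathfrak{n})\bigr)$. Since $\mathrm{ad}(Y)|_\mathfrak{n}$ is skew-symmetric it is semisimple with purely imaginary eigenvalues, so I would complexify and split $\mathfrak{n}^{\mathbb{C}}=\bigoplus_\alpha\mathfrak{n}_\alpha$ into its eigenspaces. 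Writing $X=\sum_\beta X_\beta$ in the same weights, the derivation identity shows $\mathrm{ad}(X_\beta)\colon\mathfrak{n}_\alpha\to\mathfrak{n}_{\alpha+\beta}$, so only $X_0$ contributes to the diagonal blocks of $\mathrm{ad}(X)$; and since $X_0\in\mathfrak{n}_0\subset\mathfrak{n}$ has nilpotent adjoint preserving each $\mathfrak{n}_\alpha$, every diagonal block is nilpotent and traceless. Therefore $\mathrm{tr}_\mathfrak{n}\bigl((\mathrm{ad}(X)|_\mathfrak{n})(\mathrm{ad}(Y)|_\mathfrak{n})\bigr)=\sum_\alpha\alpha\,\mathrm{tr}(\mathrm{ad}(X_0)|_{\mathfrak{n}_\alpha})=0$, giving $B_\mathfrak{g}(X,Y)=0$ and hence $\mathfrak{n}(\mathfrak{g})\subset\mathfrak{m}$. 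The delicate point is exactly this weight bookkeeping, which reconciles the nilpotency of $\mathrm{ad}(X)$ with the semisimplicity of $\mathrm{ad}(Y)$ to force the mixed trace to vanish.
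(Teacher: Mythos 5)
Your proposal is correct, but it is worth noting that the paper itself contains no proof of this lemma: it is stated as a summary of Lemma 2 and Remark 1 in \cite{Ni2017}, so your argument is a genuine self-contained replacement rather than a parallel of anything in the text. Your part (1) (averaging over the compact closure of $\mathrm{Ad}_\mathfrak{g}(H)$ to make each $\mathrm{ad}(Y)$, $Y\in\mathfrak{h}$, skew-symmetric, then using effectiveness to kill $\mathfrak{h}\cap\mathfrak{c}(\mathfrak{g})$) and your reductivity argument are the standard ones and are fine. For the inclusion $\mathfrak{n}(\mathfrak{g})\subset\mathfrak{m}$, your weight bookkeeping does work: the eigenspace shift $\mathrm{ad}(X_\beta)\colon\mathfrak{n}_\alpha\to\mathfrak{n}_{\alpha+\beta}$ is correct, only $X_0$ contributes to the diagonal blocks, and $\mathrm{ad}(X_0)$ is nilpotent on each invariant block, so the mixed trace vanishes. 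However, this is heavier than necessary, and it proves less than what is actually true. The nilradical is orthogonal to \emph{all} of $\mathfrak{g}$ with respect to $B_\mathfrak{g}$, not just to $\mathfrak{h}$: for $X\in\mathfrak{n}(\mathfrak{g})$ and \emph{any} $Y\in\mathfrak{g}$, each term $\mathfrak{n}^k$ of the lower central series of $\mathfrak{n}(\mathfrak{g})$ is an ideal of $\mathfrak{g}$ (by Jacobi and induction), so $\mathrm{ad}(Y)$ preserves every $\mathfrak{n}^k$ while $\mathrm{ad}(X)$ maps $\mathfrak{g}$ into $\mathfrak{n}^1$ and $\mathfrak{n}^k$ into $\mathfrak{n}^{k+1}$; hence $\mathrm{ad}(X)\mathrm{ad}(Y)$ strictly shifts a finite filtration, is nilpotent, and therefore traceless, giving $B_\mathfrak{g}(\mathfrak{n}(\mathfrak{g}),\mathfrak{g})=0$ directly. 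This one-line filtration argument replaces your complexification, eigenspace decomposition, and diagonal-block analysis, needs no semisimplicity of $\mathrm{ad}(Y)$ (so no appeal to the compactly imbedded property of $\mathfrak{h}$ in this step), and yields the stronger statement that $\mathfrak{n}(\mathfrak{g})$ lies in the radical of the Killing form, which is what makes the inclusion $\mathfrak{n}(\mathfrak{g})\subset\mathfrak{m}$ automatic for the $B_\mathfrak{g}$-orthogonal complement.
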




\subsection{Bounded vector for a coset space}

For any smooth coset space $G/H$, where $H$ is a closed subgroup of $G$, $\mathrm{Lie}(G)=\mathfrak{g}$ and $\mathrm{Lie}(H)=\mathfrak{h}$, we denote $\mathrm{pr}_{\mathfrak{g}/\mathfrak{h}}$ the natural linear projection from $\mathfrak{g}$ to $\mathfrak{g}/\mathfrak{h}$. We call any vector $X\in\mathfrak{g}$
a {\it bounded vector} for $G/H$, if
\begin{equation}\label{002}
f(g)=\|\mathrm{pr}_{\mathfrak{g}/\mathfrak{h}}\bigl(\mathrm{Ad}(g)X\bigr)\|,
\quad\forall\, g\in G,
\end{equation}
is a bounded function, where $\|\cdot\|$ is any norm on
$\mathfrak{g}/\mathfrak{h}$.

Since $\mathfrak{g}/\mathfrak{h}$ has a finite dimension, any two norms $\|\cdot\|_1$ and $\|\cdot\|_2$ on it are equivalent
in the sense that
$$c_1\|u\|_1\leq \|u\|_2\leq c_2\|u\|_1,\quad\forall\, u\in\mathfrak{g}/\mathfrak{h},$$
where $c_1$ and $c_2$ are some positive constants. So the boundedness of $X\in\mathfrak{g}$ for $G/H$ is not relevant to
the choice of the norm.

When $\|\cdot\|$
is an $\mathrm{Ad}(H)$-invariant quadratic norm, which defines
a $G$-invariant Riemannian metric on $G/H$, the function $f(\cdot)$ on $G$ defined in (\ref{002}) is right $H$-invariant,
so it can be descended to $G/H$, and coincides with the length
function of the Killing vector field induced by $X$.
Summarizing this observation, we have the following lemma.

\begin{lemma}\label{lemma 1}
If $X\in\mathfrak{g}$ is a bounded vector for $G/H$, then it defines a bounded Killing vector field for any $G$-invariant Riemannian metric on $G/H$. Conversely, if $G/H$ is endowed with a $G$-invariant Riemannian metric and $X\in\mathfrak{g}$ induces
a bounded Killing vector field, then $X$ is a bounded vector
for $G/H$.
\end{lemma}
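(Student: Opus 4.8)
The plan is to establish a single identity relating the function $f$ of (\ref{002}) to the length function of the induced Killing vector field, from which both implications fall out simultaneously. First I would fix a $G$-invariant Riemannian metric on $G/H$ and recall that it is determined by an $\mathrm{Ad}(H)$-invariant inner product on $\mathfrak{m}\cong T_o(G/H)$, where $o=eH$ and $\mathfrak{g}=\mathfrak{h}+\mathfrak{m}$ is a reductive decomposition (for instance the $B_\mathfrak{g}$-orthogonal one supplied by Lemma \ref{lemma 0}). Transporting this inner product through the linear isomorphism $\mathfrak{m}\cong\mathfrak{g}/\mathfrak{h}$ produces an $\mathrm{Ad}(H)$-invariant quadratic norm $\|\cdot\|$ on $\mathfrak{g}/\mathfrak{h}$. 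The key elementary observation is that, under the identification $T_o(G/H)\cong\mathfrak{g}/\mathfrak{h}$, the value at $o$ of the Killing field $Y^*$ induced by any $Y\in\mathfrak{g}$ equals $\mathrm{pr}_{\mathfrak{g}/\mathfrak{h}}(Y)$, since differentiating $t\mapsto\exp(tY)H$ at $t=0$ returns the $\mathfrak{m}$-component of $Y$.

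Next I would compute the length of $X^*$ at an arbitrary point $gH$. Because the metric is $G$-invariant, the left translation $L_{g^{-1}}$ is an isometry carrying $gH$ to $o$, so $\|X^*_{gH}\|=\|(dL_{g^{-1}})X^*_{gH}\|$. Differentiating the curve $t\mapsto g^{-1}\exp(tX)gH=\exp\bigl(t\,\mathrm{Ad}(g^{-1})X\bigr)H$ at $t=0$ identifies $(dL_{g^{-1}})X^*_{gH}$ with the value at $o$ of the Killing field induced by $\mathrm{Ad}(g^{-1})X$, which by the previous paragraph is $\mathrm{pr}_{\mathfrak{g}/\mathfrak{h}}\bigl(\mathrm{Ad}(g^{-1})X\bigr)$. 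Hence
$$\|X^*_{gH}\|=\bigl\|\mathrm{pr}_{\mathfrak{g}/\mathfrak{h}}\bigl(\mathrm{Ad}(g^{-1})X\bigr)\bigr\|=f(g^{-1}),$$
where $f$ is formed with the metric norm $\|\cdot\|$. In particular $f$ is right $H$-invariant and descends to the length function of $X^*$ on $G/H$, as claimed in the paragraph preceding the statement.

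Finally I would read off the equivalence. Since $g\mapsto g^{-1}$ is a bijection of $G$, the length function $gH\mapsto\|X^*_{gH}\|$ is bounded on $G/H$ if and only if $f(\cdot)$ is bounded on $G$ for the metric norm; and by the norm-equivalence remark just before the lemma, boundedness of $f$ for the metric norm is equivalent to its boundedness for an arbitrary norm, i.e. to $X$ being a bounded vector for $G/H$. This chain yields the forward implication at once (a bounded vector induces a bounded Killing field for \emph{every} $G$-invariant metric, each such metric merely furnishing one of the equivalent norms) and the converse (a bounded Killing field for \emph{some} $G$-invariant metric forces $f$ to be bounded, hence $X$ to be a bounded vector).

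The computation is routine homogeneous geometry, so no step should present a genuine difficulty; the only point requiring care is the bookkeeping between the $\mathrm{Ad}(g)X$ appearing in the definition of $f$ and the $\mathrm{Ad}(g^{-1})X$ produced by pulling back along the isometry $L_{g^{-1}}$, which is harmless precisely because inversion is a bijection of $G$, together with the correct identification of $\mathrm{pr}_{\mathfrak{g}/\mathfrak{h}}(Y)$ with the value of $Y^*$ at the origin.
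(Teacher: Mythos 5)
Your proof is correct and takes essentially the same route as the paper: the paper's entire justification is the observation, stated just before the lemma, that for an $\mathrm{Ad}(H)$-invariant quadratic norm the function $f$ of (\ref{002}) descends to the length function of the induced Killing field, combined with the norm-equivalence remark; you have simply made that observation explicit. Your careful inversion bookkeeping is in fact slightly more precise than the paper's wording, since the identity $\|X^*_{gH}\|=f(g^{-1})$ shows that $f$ is left (not right) $H$-invariant and that it is $g\mapsto f(g^{-1})$ which descends to $G/H$ --- a harmless discrepancy for boundedness, exactly as you note.
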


The boundedness condition may be kept when we change the coset space. By definition, it is obvious to see

\begin{lemma}\label{lemma 2}
A vector $X\in\mathfrak{g}$ is bounded for the smooth coset space
$G/H$ iff it is bounded for the universal covering $\widetilde{G}/\widetilde{H}$ of $G/H$, where $\widetilde{G}$ is the universal
covering group of $G$, and $\widetilde{H}$ is closed connected subgroup which $\mathrm{Lie}(H)=\mathfrak{h}$ generates in $\widetilde{G}$.
\end{lemma}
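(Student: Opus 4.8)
The plan is to reduce the whole statement to the single fact that the adjoint representation on $\mathfrak{g}$ is insensitive to passing to a covering. Let $\pi\colon\widetilde{G}\to G$ denote the covering homomorphism. First I would record that $\pi$ is a surjective homomorphism of connected Lie groups whose differential at the identity is the canonical identification $\mathrm{Lie}(\widetilde{G})=\mathfrak{g}$. By the naturality of the adjoint representation (for a homomorphism $\phi$ one has $d\phi\circ\mathrm{Ad}(g)=\mathrm{Ad}(\phi(g))\circ d\phi$), this identification gives
$$\mathrm{Ad}_{\widetilde{G}}(\tilde g)=\mathrm{Ad}_{G}(\pi(\tilde g))\quad\text{for every }\tilde g\in\widetilde{G},$$
when both sides are regarded as operators on the common Lie algebra $\mathfrak{g}$.

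Next I would observe that the two coset spaces share the same infinitesimal data on which the definition of boundedness depends. By hypothesis $\widetilde{H}$ is the closed connected subgroup of $\widetilde{G}$ with $\mathrm{Lie}(\widetilde{H})=\mathfrak{h}$, so the quotient vector space $\mathfrak{g}/\mathrm{Lie}(\widetilde{H})$ equals $\mathfrak{g}/\mathfrak{h}$ and the projection $\mathrm{pr}_{\mathfrak{g}/\mathfrak{h}}$ is literally the same linear map in both situations. We may therefore use one and the same norm $\|\cdot\|$ on $\mathfrak{g}/\mathfrak{h}$ to test boundedness of $X$ for both $G/H$ and $\widetilde{G}/\widetilde{H}$, which is legitimate since boundedness does not depend on the choice of norm.

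Combining the two observations, the boundedness function attached to $X$ on $\widetilde{G}$, namely $\widetilde{f}(\tilde g)=\|\mathrm{pr}_{\mathfrak{g}/\mathfrak{h}}(\mathrm{Ad}_{\widetilde{G}}(\tilde g)X)\|$, satisfies $\widetilde{f}=f\circ\pi$, where $f$ is the function on $G$ from (\ref{002}). Since a covering homomorphism of connected Lie groups is surjective, $\pi(\widetilde{G})=G$, so $\widetilde{f}$ and $f$ have exactly the same image in $\mathbb{R}$. Consequently $\widetilde{f}$ is bounded on $\widetilde{G}$ if and only if $f$ is bounded on $G$, which is precisely the claimed equivalence.

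I expect no serious obstacle here; the argument is essentially the formal statement that $\mathrm{Ad}$ factors through $\pi$. The only point deserving a line of care is that $\widetilde{H}$ is well defined, closed, and has Lie algebra exactly $\mathfrak{h}$: one takes $\widetilde{H}$ to be the identity component of $\pi^{-1}(H)$, which is a closed connected subgroup of $\widetilde{G}$, and its Lie algebra is $\mathfrak{h}$ because $\pi$ is a local diffeomorphism. Everything else is purely formal.
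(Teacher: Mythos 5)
Your proposal is correct and takes essentially the same approach as the paper, which states this lemma without a written proof (``By definition, it is obvious to see''): your identities $\mathrm{Ad}_{\widetilde{G}}(\tilde g)=\mathrm{Ad}_{G}(\pi(\tilde g))$ on the common Lie algebra $\mathfrak{g}$ and $\widetilde{f}=f\circ\pi$, combined with the surjectivity of the covering homomorphism $\pi$, are exactly the definitional unwinding the authors have in mind. The additional care you take that $\widetilde{H}$ is the identity component of $\pi^{-1}(H)$, hence closed, connected, and with Lie algebra $\mathfrak{h}$, is a sound supplement rather than a deviation.
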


For any chain of subalgebras $\mathfrak{h}\subset\mathfrak{k}\subset\mathfrak{g}$, the natural linear
projection $\mathrm{pr}:\mathfrak{g}/\mathfrak{h}\rightarrow
\mathfrak{g}/\mathfrak{k}$ is continuous with respect to standard
topologies. So it maps bounded sets to bounded sets, with respect to any norms $\|\cdot\|_1$ and $\|\cdot\|_2$ on
$\mathfrak{g}/\mathfrak{h}$ and $\mathfrak{g}/\mathfrak{k}$ respectively. Obviously
$\mathrm{pr}\circ\mathrm{pr}_{\mathfrak{g}/\mathfrak{h}}=
\mathrm{pr}_{\mathfrak{g}/\mathfrak{k}}$, so
$$
\mathrm{pr}\bigl(\mathrm{pr}_{\mathfrak{g}/\mathfrak{h}}
(\mathrm{Ad}(G)X)\bigr)=\mathrm{pr}_{\mathfrak{g}/\mathfrak{k}}
(\mathrm{Ad}(G)X).
$$
By these observations, it is easy to prove the following lemma.

\begin{lemma}\label{lemma 3}
Assume $K$ is a closed subgroup of $G$ which Lie algebra $\mathfrak{k}$ satisfies  \linebreak
$\mathfrak{h}\subset\mathfrak{k}\subset\mathfrak{g}$. If $X\in\mathfrak{g}$ is bounded for $G/H$, then it is bounded for $G/K$ as well.
\end{lemma}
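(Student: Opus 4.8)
The plan is to reduce boundedness for $G/K$ to boundedness for $G/H$ by pushing the orbit forward along the intermediate linear projection discussed just above the statement. First I would restate the hypothesis in orbit form: saying that $X$ is bounded for $G/H$ means precisely that the subset
$$\mathrm{pr}_{\mathfrak{g}/\mathfrak{h}}(\mathrm{Ad}(G)X) = \{\mathrm{pr}_{\mathfrak{g}/\mathfrak{h}}(\mathrm{Ad}(g)X) : g \in G\}$$
is bounded in the finite-dimensional space $\mathfrak{g}/\mathfrak{h}$. By the norm-equivalence remark already established, this is independent of the chosen norm, so I may fix convenient norms on $\mathfrak{g}/\mathfrak{h}$ and $\mathfrak{g}/\mathfrak{k}$ without loss of generality.

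Next I would invoke the continuous linear projection $\mathrm{pr} : \mathfrak{g}/\mathfrak{h} \to \mathfrak{g}/\mathfrak{k}$, which is well-defined because $\mathfrak{h} \subset \mathfrak{k}$. Being a continuous linear map between finite-dimensional normed spaces, it is bounded and therefore sends bounded sets to bounded sets. Applying it to the bounded orbit from the previous step yields a bounded subset of $\mathfrak{g}/\mathfrak{k}$.

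Finally I would identify that image using the factorization $\mathrm{pr}\circ\mathrm{pr}_{\mathfrak{g}/\mathfrak{h}} = \mathrm{pr}_{\mathfrak{g}/\mathfrak{k}}$ recorded above, which gives
$$\mathrm{pr}\bigl(\mathrm{pr}_{\mathfrak{g}/\mathfrak{h}}(\mathrm{Ad}(g)X)\bigr) = \mathrm{pr}_{\mathfrak{g}/\mathfrak{k}}(\mathrm{Ad}(g)X)$$
for every $g \in G$. Hence the orbit $\mathrm{pr}_{\mathfrak{g}/\mathfrak{k}}(\mathrm{Ad}(G)X)$ is exactly the bounded image just obtained, so the defining function for $G/K$ is bounded and $X$ is a bounded vector for $G/K$.

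I do not expect a genuine obstacle here; the content is entirely the three facts assembled in the paragraph preceding the statement, namely the continuity of $\mathrm{pr}$, the factorization identity, and norm-independence. The only points requiring a word of care are that $K$ is closed so that $G/K$ is a legitimate coset space to which the definition of bounded vector applies, and that one must pass from the pointwise statement about the function $f$ to the set-theoretic statement about orbits and back, both of which are routine. Notably, the argument uses no Riemannian or deeper Lie-theoretic structure beyond the purely linear-algebraic projection, which is why it remains valid at the level of general smooth coset spaces.
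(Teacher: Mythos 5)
Your proof is correct and follows exactly the argument the paper intends: the observations assembled in the paragraph preceding the lemma (continuity of the linear projection $\mathrm{pr}:\mathfrak{g}/\mathfrak{h}\rightarrow\mathfrak{g}/\mathfrak{k}$, the factorization $\mathrm{pr}\circ\mathrm{pr}_{\mathfrak{g}/\mathfrak{h}}=\mathrm{pr}_{\mathfrak{g}/\mathfrak{k}}$, and norm-independence) are precisely what the authors cite as making the lemma "easy to prove." Nothing is missing, and your added remarks about closedness of $K$ and the orbit-versus-function reformulation are accurate but routine.
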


To summarize, the boundedness of Lie algebra vectors for a coset space is originated and intrinsically related to the boundedness of Killing vector fields for a homogeneous metric. However it is an algebraic condition,
which can be discussed more generally and is not relevant to the
choice or existence of homogeneous metrics.

\section{Proof of Theorem \ref{theorem 1} and Theorem \ref{theorem 2}}

\subsection{A key lemma for proving Theorem \ref{theorem 2}}

\begin{lemma}\label{lemma 10}
Let $\mathfrak{g}$ be a Lie algebra and $\mathfrak{g}=\mathfrak{r}(\mathfrak{g})+\mathfrak{s}$ be
a Levi decomposition. Then we have the following Lie algebra direct sum for the centralizer
$\mathfrak{c}_{\mathfrak{g}}(\mathfrak{n}(\mathfrak{g}))$ of the nilradical
$\mathfrak{n}(\mathfrak{g})$ in $\mathfrak{g}$\,{\rm:}
\begin{equation}\label{011}
\mathfrak{c}_{\mathfrak{g}}({\mathfrak{n}(\mathfrak{g})})=
\bigl(\mathfrak{c}_{\mathfrak{g}}({\mathfrak{n}(\mathfrak{g})})
\cap\mathfrak{r}(\mathfrak{g})\bigr)\oplus
\bigl(\mathfrak{c}_{\mathfrak{g}}
({\mathfrak{n}(\mathfrak{g})})\cap\mathfrak{s}\bigr)
=\mathfrak{c}_{\mathfrak{r}(\mathfrak{g})}
(\mathfrak{n}(\mathfrak{g}))
\oplus\mathfrak{c}_\mathfrak{s}(\mathfrak{n}(\mathfrak{g})).
\end{equation}
Moreover we have the following\,{\rm:}
\begin{enumerate}
\item The two summands $\mathfrak{c}_{\mathfrak{r}(\mathfrak{g})}
    (\mathfrak{n}(\mathfrak{g}))
=\mathfrak{c}(\mathfrak{n}(\mathfrak{g}))$ and
$\mathfrak{c}_\mathfrak{s}(\mathfrak{n}(\mathfrak{g}))=
\mathfrak{c}_\mathfrak{s}({\mathfrak{r}(\mathfrak{g})})$ are Abelian and semi-simple ideals of $\mathfrak{g}$ respectively.
\item The summand $\mathfrak{c}_\mathfrak{s}(\mathfrak{r}(\mathfrak{g}))$ is
contained in the intersection of all Levi subalgebras, so it does not depend on the choice of the Levi subalgebra $\mathfrak{s}$.
\end{enumerate}
\end{lemma}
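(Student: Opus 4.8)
The plan is to analyze the ideal $\mathfrak{m}:=\mathfrak{c}_{\mathfrak{g}}(\mathfrak{n}(\mathfrak{g}))$ through its own Levi structure, rather than trying to split a general element $X=X_r+X_s\in\mathfrak{m}$ directly along $\mathfrak{g}=\mathfrak{r}(\mathfrak{g})\oplus\mathfrak{s}$. The naive route is exactly where the difficulty sits: for $Z\in\mathfrak{n}(\mathfrak{g})$ both $[X_r,Z]$ and $[X_s,Z]$ land in $\mathfrak{n}(\mathfrak{g})\subseteq\mathfrak{r}(\mathfrak{g})$, so the vanishing of $[X,Z]=[X_r,Z]+[X_s,Z]$ cannot be separated using the vector space decomposition. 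The device that overcomes this will be to recognize the semisimple part of $\mathfrak{m}$ as an ideal of the ambient $\mathfrak{g}$, which then must lie in every Levi subalgebra.

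First I would record that $\mathfrak{m}$ is an ideal of $\mathfrak{g}$ (immediate from the Jacobi identity, since $\mathfrak{n}(\mathfrak{g})$ is an ideal), and compute the solvable part $\mathfrak{m}\cap\mathfrak{r}(\mathfrak{g})=\mathfrak{c}_{\mathfrak{r}(\mathfrak{g})}(\mathfrak{n}(\mathfrak{g}))$. The key observation is that this subalgebra is nilpotent: using $[\mathfrak{r},\mathfrak{r}]\subseteq\mathfrak{n}$ one gets $[\mathfrak{c}_{\mathfrak{r}}(\mathfrak{n}),\mathfrak{c}_{\mathfrak{r}}(\mathfrak{n})]\subseteq\mathfrak{n}$, whence $[\mathfrak{c}_{\mathfrak{r}}(\mathfrak{n}),[\mathfrak{c}_{\mathfrak{r}}(\mathfrak{n}),\mathfrak{c}_{\mathfrak{r}}(\mathfrak{n})]]\subseteq[\mathfrak{c}_{\mathfrak{r}}(\mathfrak{n}),\mathfrak{n}]=0$, so the lower central series terminates. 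Being a nilpotent ideal of $\mathfrak{g}$ (an intersection of the two ideals $\mathfrak{m}$ and $\mathfrak{r}$), it lies in the nilradical by maximality; hence $\mathfrak{c}_{\mathfrak{r}}(\mathfrak{n})\subseteq\mathfrak{n}\cap\mathfrak{m}=\mathfrak{c}(\mathfrak{n})$, and the reverse inclusion is obvious. This gives $\mathfrak{m}\cap\mathfrak{r}(\mathfrak{g})=\mathfrak{c}(\mathfrak{n}(\mathfrak{g}))$, which is abelian and, as $\mathfrak{n}\cap\mathfrak{c}_{\mathfrak{g}}(\mathfrak{n})$, an ideal of $\mathfrak{g}$.

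Next I would exploit the Levi decomposition of the ideal $\mathfrak{m}$ itself. Since $\mathfrak{m}/(\mathfrak{m}\cap\mathfrak{r})$ embeds as an ideal of the semisimple $\mathfrak{s}$ under the projection $\mathfrak{g}\to\mathfrak{g}/\mathfrak{r}\cong\mathfrak{s}$, the radical of $\mathfrak{m}$ is precisely $\mathfrak{c}(\mathfrak{n})$, and a Levi subalgebra $\mathfrak{l}$ of $\mathfrak{m}$ satisfies $[\mathfrak{l},\mathfrak{c}(\mathfrak{n})]\subseteq[\mathfrak{c}_{\mathfrak{g}}(\mathfrak{n}),\mathfrak{n}]=0$ because $\mathfrak{c}(\mathfrak{n})\subseteq\mathfrak{n}$. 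Hence $\mathfrak{m}=\mathfrak{c}(\mathfrak{n})\oplus\mathfrak{l}$ is a direct sum of commuting ideals of $\mathfrak{m}$, and therefore $\mathfrak{l}=[\mathfrak{m},\mathfrak{m}]$. The crucial point is that the derived algebra of an ideal is again an ideal of $\mathfrak{g}$, so $\mathfrak{l}=[\mathfrak{c}_{\mathfrak{g}}(\mathfrak{n}),\mathfrak{c}_{\mathfrak{g}}(\mathfrak{n})]$ is a semisimple ideal of $\mathfrak{g}$ that is manifestly independent of the choice of $\mathfrak{s}$.

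Finally I would identify $\mathfrak{l}$ with $\mathfrak{c}_{\mathfrak{s}}(\mathfrak{r})$ and conclude. A semisimple ideal $\mathfrak{l}$ of $\mathfrak{g}$ satisfies $[\mathfrak{l},\mathfrak{r}]\subseteq\mathfrak{l}\cap\mathfrak{r}=0$, so $\mathfrak{l}\subseteq\mathfrak{c}_{\mathfrak{g}}(\mathfrak{r})$; moreover it lies inside every Levi subalgebra. For the latter I would use the splitting $\mathfrak{g}=\mathfrak{l}\oplus\mathfrak{c}_{\mathfrak{g}}(\mathfrak{l})$ (complete reducibility of the $\mathfrak{l}$-action on $\mathfrak{g}$) to produce one Levi subalgebra containing $\mathfrak{l}$, and then invoke the cited Malcev uniqueness: any two Levi subalgebras differ by some $\mathrm{Ad}(\exp(z))$ with $z\in[\mathfrak{g},\mathfrak{r}]\subseteq\mathfrak{n}$, which fixes $\mathfrak{l}$ pointwise since $[\mathfrak{n},\mathfrak{l}]\subseteq[\mathfrak{r},\mathfrak{l}]=0$. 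Thus $\mathfrak{l}\subseteq\mathfrak{s}$, giving $\mathfrak{l}\subseteq\mathfrak{c}_{\mathfrak{s}}(\mathfrak{r})$; conversely $\mathfrak{c}_{\mathfrak{s}}(\mathfrak{r})$ is a semisimple ideal of $\mathfrak{g}$ contained in $\mathfrak{m}$, so $\mathfrak{c}_{\mathfrak{s}}(\mathfrak{r})=[\mathfrak{c}_{\mathfrak{s}}(\mathfrak{r}),\mathfrak{c}_{\mathfrak{s}}(\mathfrak{r})]\subseteq[\mathfrak{m},\mathfrak{m}]=\mathfrak{l}$. Since $\mathfrak{c}(\mathfrak{n})\subseteq\mathfrak{r}$ and $\mathfrak{l}\subseteq\mathfrak{s}$, the decomposition $\mathfrak{m}=\mathfrak{c}(\mathfrak{n})\oplus\mathfrak{l}$ forces $\mathfrak{m}\cap\mathfrak{r}=\mathfrak{c}(\mathfrak{n})$ and $\mathfrak{m}\cap\mathfrak{s}=\mathfrak{l}=\mathfrak{c}_{\mathfrak{s}}(\mathfrak{r})=\mathfrak{c}_{\mathfrak{s}}(\mathfrak{n})$, which is exactly (\ref{011}) together with assertions (1) and (2). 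The main obstacle is the one flagged at the outset, namely splitting the centralizer along the Levi decomposition, and it is resolved entirely by the passage through $\mathfrak{l}=[\mathfrak{m},\mathfrak{m}]$.
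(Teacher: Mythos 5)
Your proof is correct, and it takes a genuinely different route from the paper's. The paper proves the linear splitting (\ref{011}) by contradiction: if some $X\in\mathfrak{c}_{\mathfrak{g}}(\mathfrak{n}(\mathfrak{g}))$ had $[X_s,\mathfrak{n}(\mathfrak{g})]\neq 0$, then $\mathrm{ad}_{\mathfrak{n}(\mathfrak{g})}(X_s)=-\mathrm{ad}_{\mathfrak{n}(\mathfrak{g})}(X_r)$ would make the nonzero semi-simple algebra $\mathrm{ad}_{\mathfrak{n}(\mathfrak{g})}(\mathfrak{s}_1)$ (the image of the ideal of $\mathfrak{s}$ generated by $X_s$) a subalgebra of the solvable algebra $\mathrm{ad}_{\mathfrak{n}(\mathfrak{g})}(\mathfrak{r}(\mathfrak{g}))$, which is impossible; the identification of the two summands and Malcev's theorem then give parts (1) and (2). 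You avoid splitting elements altogether: you treat $\mathfrak{m}=\mathfrak{c}_{\mathfrak{g}}(\mathfrak{n}(\mathfrak{g}))$ as an ideal of $\mathfrak{g}$, show its radical equals $\mathfrak{c}(\mathfrak{n}(\mathfrak{g}))$ (your observation that $\mathfrak{c}_{\mathfrak{r}(\mathfrak{g})}(\mathfrak{n}(\mathfrak{g}))$ is itself a two-step nilpotent ideal of $\mathfrak{g}$ is a mild streamlining of the paper's argument that $\mathfrak{c}_{\mathfrak{r}(\mathfrak{g})}(\mathfrak{n}(\mathfrak{g}))+\mathfrak{n}(\mathfrak{g})$ is a nilpotent ideal), and then exhibit the semi-simple summand canonically as the derived algebra $[\mathfrak{m},\mathfrak{m}]$, which is automatically an ideal of $\mathfrak{g}$. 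The paper's route is more economical in tools, needing only the behavior of semi-simple and solvable algebras under homomorphisms; yours buys a choice-free description, $\mathfrak{c}_{\mathfrak{s}}(\mathfrak{r}(\mathfrak{g}))=[\mathfrak{c}_{\mathfrak{g}}(\mathfrak{n}(\mathfrak{g})),\mathfrak{c}_{\mathfrak{g}}(\mathfrak{n}(\mathfrak{g}))]$, from which independence of the Levi subalgebra is manifest (though both proofs still invoke Malcev's theorem to get containment in every Levi subalgebra). One step you left compressed, which is a detail to add rather than a gap: producing a Levi subalgebra of $\mathfrak{g}$ containing $\mathfrak{l}$ from the splitting $\mathfrak{g}=\mathfrak{l}\oplus\mathfrak{c}_{\mathfrak{g}}(\mathfrak{l})$ requires choosing a Levi subalgebra $\mathfrak{s}''$ of the ideal $\mathfrak{c}_{\mathfrak{g}}(\mathfrak{l})$, noting that $\mathfrak{r}(\mathfrak{c}_{\mathfrak{g}}(\mathfrak{l}))=\mathfrak{r}(\mathfrak{g})$, and checking that $\mathfrak{l}\oplus\mathfrak{s}''$ is a semi-simple complement to $\mathfrak{r}(\mathfrak{g})$ in $\mathfrak{g}$; this is routine and does not affect correctness.
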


\begin{proof}
Firstly, we prove (\ref{011}) as a linear decomposition.

Assume conversely that this is not true, then we can find a vector $X\in\mathfrak{g}$ such that
$[X,\mathfrak{n}(\mathfrak{g})]=0$ and $[X_s,\mathfrak{n}(\mathfrak{g})]\neq 0$.
Denote $\mathrm{ad}_{\mathfrak{n}(\mathfrak{g})}$ the restriction
of the $\mathrm{ad}$-action from $\mathfrak{n}(\mathfrak{g})$ to
$\mathfrak{n}(\mathfrak{g})$. Then
$\mathrm{ad}_{\mathfrak{n}(\mathfrak{g})}(X_r)
=-\mathrm{ad}_{\mathfrak{n}(\mathfrak{g})}(X_s)$ is a nonzero linear endomorphism in
the general linear Lie algebra $\mathfrak{gl}(\mathfrak{n}(\mathfrak{g}))=\mathrm{Lie}(\mathrm{GL}(\mathfrak{n}(\mathfrak{g})))$ where $\mathfrak{n}(\mathfrak{g})$
as well as its subspaces are viewed as real vector spaces.

The map $\mathrm{ad}_{\mathfrak{n}(\mathfrak{g})}$
is a Lie algebra endomorphism
from $\mathfrak{g}$ to $\mathfrak{gl}(\mathfrak{n}(\mathfrak{g}))$.
The vector $X_s$ generates a semi-simple ideal $\mathfrak{s}_1$
of $\mathfrak{s}$, which can be presented as
\begin{equation}\label{004}
\mathfrak{s}_1=\mathbb{R}X_s+
[\mathfrak{s},X_s]+[\mathfrak{s},[\mathfrak{s},X_s]]+
[\mathfrak{s},[\mathfrak{s},[\mathfrak{s},X_s]]]+\cdots.
\end{equation}
Meanwhile, $X_r$ generates a sub-representation space $\mathfrak{v}_1$ in $\mathfrak{r}(\mathfrak{g})$
for the $\mathrm{ad}(\mathfrak{s})$-actions, i.e.
\begin{equation}\label{005}
\mathfrak{v}_1=\mathbb{R}X_r+
[\mathfrak{s},X_r]+[\mathfrak{s},[\mathfrak{s},X_r]]+
[\mathfrak{s},[\mathfrak{s},[\mathfrak{s},X_r]]]+\cdots.
\end{equation}

Compare (\ref{004}) and (\ref{005}), we can see
that $\mathrm{ad}_{\mathfrak{n}(\mathfrak{g})}
(\mathfrak{s}_1)$ and
$\mathrm{ad}_{\mathfrak{n}(\mathfrak{g})}
(\mathfrak{v}_1)$ have the same image in $\mathfrak{gl}(\mathfrak{n}(\mathfrak{g}))$, i.e.
$$\mathrm{ad}_{\mathfrak{n}(\mathfrak{g})}
(\mathfrak{s}_1)=\mathrm{ad}_{\mathfrak{n}(\mathfrak{g})}
(\mathfrak{v}_1)
=\mathbb{R}A+
[\mathrm{ad}_{\mathfrak{n}(\mathfrak{g})}(\mathfrak{s}),A]+
[\mathrm{ad}_{\mathfrak{n}(\mathfrak{g})}(\mathfrak{s}),
[\mathrm{ad}_{\mathfrak{n}(\mathfrak{g})}(\mathfrak{s}),A]]
+\cdots.$$

Denote
$\mathfrak{u}_1=\mathrm{ad}_{\mathfrak{n}(\mathfrak{g})}
(\mathfrak{s}_1)$
and $\mathfrak{u}_2=\mathrm{ad}_{\mathfrak{n}(\mathfrak{g})}
(\mathfrak{r}(\mathfrak{g}))$.
We have just showed $0\neq\mathfrak{u}_1\subset\mathfrak{u}_2$.
Since $\mathrm{ad}_{\mathfrak{n}(\mathfrak{g})}:\mathfrak{g}\rightarrow \mathfrak{gl}(\mathfrak{n}(\mathfrak{g}))$
is a Lie algebra endomorphism,
$\mathfrak{u}_1$ is semi-simple and $\mathfrak{u}_2$ is solvable.
But this is impossible, so (\ref{011}) is a linear direct sum decomposition.
\smallskip

Further, we prove that $\mathfrak{c}_{\mathfrak{r}(\mathfrak{g})}(\mathfrak{n}(\mathfrak{g}))
=\mathfrak{c}(\mathfrak{n}(\mathfrak{g}))$ is an Abelian ideal of $\mathfrak{g}$.

The summand
$\mathfrak{c}_{\mathfrak{r}(\mathfrak{g})}
(\mathfrak{n}(\mathfrak{g}))$ in (\ref{011}) is an ideal of
$\mathfrak{g}$ contained in the radical $\mathfrak{r}(\mathfrak{g})$. It is not hard to
check that $\mathfrak{c}_{\mathfrak{r}(\mathfrak{g})}
(\mathfrak{n}(\mathfrak{g}))
+\mathfrak{n}(\mathfrak{g})$
is a nilpotent ideal of $\mathfrak{g}$. By the definition of the nilradical, we must have
$\mathfrak{c}_{\mathfrak{r}(\mathfrak{g})}
(\mathfrak{n}(\mathfrak{g}))
\subset\mathfrak{n}(\mathfrak{g})$, i.e. $\mathfrak{c}_{\mathfrak{r}(\mathfrak{g})}
(\mathfrak{n}(\mathfrak{g}))=\mathfrak{c}(\mathfrak{n}(\mathfrak{g}))$.
So it is an Abelian ideal.

Finally, we prove that $\mathfrak{c}_\mathfrak{s}(\mathfrak{n}(\mathfrak{g}))=
\mathfrak{c}_\mathfrak{s}(\mathfrak{r}(\mathfrak{g}))$ is a semi-simple ideal of $\mathfrak{g}$
contained in the intersection of all Levi subalgebras.

Obviously $\mathfrak{c}_\mathfrak{s}(\mathfrak{n}(\mathfrak{g}))$ is an ideal of $\mathfrak{s}$. It is
a semi-simple Lie algebra itself, so we have $[\mathfrak{c}_\mathfrak{s}(\mathfrak{n}(\mathfrak{g}))
,\mathfrak{c}_\mathfrak{s}(\mathfrak{n}(\mathfrak{g}))]
=\mathfrak{c}_\mathfrak{s}(\mathfrak{n}(\mathfrak{g}))$. It commutes with $\mathfrak{r}(\mathfrak{g})$
because
\begin{eqnarray*}
[\mathfrak{r}(\mathfrak{g}),\mathfrak{c}_\mathfrak{s}(\mathfrak{n}(\mathfrak{g}))]
&\subset&[\mathfrak{r}(\mathfrak{g}),[\mathfrak{c}_\mathfrak{s}(\mathfrak{n}(\mathfrak{g})),
\mathfrak{c}_\mathfrak{s}(\mathfrak{n}(\mathfrak{g}))]]
=[[\mathfrak{r}(\mathfrak{g}),\mathfrak{c}_\mathfrak{s}(\mathfrak{n}(\mathfrak{g}))],
\mathfrak{c}_\mathfrak{s}(\mathfrak{n}(\mathfrak{g}))]\\
&\subset&
[\mathfrak{n}(\mathfrak{g}),\mathfrak{c}_\mathfrak{s}(\mathfrak{n}(\mathfrak{g}))]=0.
\end{eqnarray*}
So we get $\mathfrak{c}_\mathfrak{s}(\mathfrak{n}(\mathfrak{g}))=
\mathfrak{c}_\mathfrak{s}(\mathfrak{r}(\mathfrak{g}))$.

It is an ideal of $\mathfrak{g}$ because
$
[\mathfrak{g},\mathfrak{c}_\mathfrak{s}(\mathfrak{r}(\mathfrak{g}))]
=[\mathfrak{s},\mathfrak{c}_\mathfrak{s}
(\mathfrak{r}(\mathfrak{g}))]\subset\mathfrak{c}_\mathfrak{s}
(\mathfrak{r}(\mathfrak{g}))$.
Therefore, we have $\mathfrak{c}_\mathfrak{s}(\mathfrak{r}(\mathfrak{g}))=
\mathrm{Ad}(g)\mathfrak{c}_\mathfrak{s}(\mathfrak{r}(\mathfrak{g}))
\subset\mathrm{Ad}(g)\mathfrak{s}$ for all $g\in G$.
So by Malcev's Theorem, $\mathfrak{c}_\mathfrak{s}(\mathfrak{r}(\mathfrak{g}))$
is contained in all Levi subalgebras, and thus independent of the choice of the Levi subalgebra.

We have proved all statements and finished the proof of the lemma.
\end{proof}
\smallskip

By similar arguments as above, we can also establish the Lie algebra direct sum
$\mathfrak{c}_\mathfrak{s}(\mathfrak{r}(\mathfrak{g}))=
\mathfrak{c}_{\mathfrak{s}_{c}}(\mathfrak{r}(\mathfrak{g}))\oplus
\mathfrak{c}_{\mathfrak{s}_{nc}}(\mathfrak{r}(\mathfrak{g}))$ in which
each summand is a semi-simple ideal of $\mathfrak{g}$. So we get the following
corollary.

\begin{corollary}\label{lemma 5}
Keep all relevant notations and assumptions, then we have
the following Lie algebra direct sum decomposition,
$$\mathfrak{c}_\mathfrak{g}(\mathfrak{n}(\mathfrak{g}))
=\mathfrak{c}_{\mathfrak{s}_{c}}(\mathfrak{r}(\mathfrak{g}))\oplus
\mathfrak{c}_{\mathfrak{s}_{nc}}(\mathfrak{r}(\mathfrak{g}))
\oplus\mathfrak{c}(\mathfrak{n}(\mathfrak{g})),$$
in which each summand is an ideal of $\mathfrak{g}$.
\end{corollary}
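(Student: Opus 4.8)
The plan is to build directly on Lemma \ref{lemma 10}, which already furnishes the Lie algebra direct sum
\[
\mathfrak{c}_\mathfrak{g}(\mathfrak{n}(\mathfrak{g}))=\mathfrak{c}(\mathfrak{n}(\mathfrak{g}))\oplus\mathfrak{c}_\mathfrak{s}(\mathfrak{r}(\mathfrak{g})),
\]
with both summands ideals of $\mathfrak{g}$, the first Abelian and the second semi-simple. Since $\mathfrak{c}(\mathfrak{n}(\mathfrak{g}))$ is already one of the three summands in the claim, the only task remaining is to refine the semi-simple ideal $\mathfrak{c}_\mathfrak{s}(\mathfrak{r}(\mathfrak{g}))$ into its compact and noncompact pieces and to check that these refined summands are again ideals of $\mathfrak{g}$.

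First I would recall that $\mathfrak{c}_\mathfrak{s}(\mathfrak{r}(\mathfrak{g}))$ is a semi-simple ideal of $\mathfrak{s}$, and that $\mathfrak{s}=\mathfrak{s}_c\oplus\mathfrak{s}_{nc}$ is a decomposition of the semi-simple algebra $\mathfrak{s}$ into two ideals with $[\mathfrak{s}_c,\mathfrak{s}_{nc}]=0$. Because every ideal of a semi-simple Lie algebra is the direct sum of a subset of its simple ideals, the ideal $\mathfrak{c}_\mathfrak{s}(\mathfrak{r}(\mathfrak{g}))$ must split along $\mathfrak{s}=\mathfrak{s}_c\oplus\mathfrak{s}_{nc}$, giving
\[
\mathfrak{c}_\mathfrak{s}(\mathfrak{r}(\mathfrak{g}))=\bigl(\mathfrak{c}_\mathfrak{s}(\mathfrak{r}(\mathfrak{g}))\cap\mathfrak{s}_c\bigr)\oplus\bigl(\mathfrak{c}_\mathfrak{s}(\mathfrak{r}(\mathfrak{g}))\cap\mathfrak{s}_{nc}\bigr).
\]
Directly from the definition of the centralizer, the two intersections equal $\mathfrak{c}_{\mathfrak{s}_c}(\mathfrak{r}(\mathfrak{g}))$ and $\mathfrak{c}_{\mathfrak{s}_{nc}}(\mathfrak{r}(\mathfrak{g}))$ respectively. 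Substituting into the formula of Lemma \ref{lemma 10} then produces the claimed three-fold direct sum.

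Next I would verify that each of the two new summands is an ideal of $\mathfrak{g}$, mimicking the argument used in Lemma \ref{lemma 10} for $\mathfrak{c}_\mathfrak{s}(\mathfrak{r}(\mathfrak{g}))$. The summand $\mathfrak{c}_{\mathfrak{s}_c}(\mathfrak{r}(\mathfrak{g}))$ is an ideal of $\mathfrak{s}_c$, hence an ideal of $\mathfrak{s}$ since $[\mathfrak{s}_{nc},\mathfrak{s}_c]=0$; being semi-simple, it equals its own derived algebra, and being contained in $\mathfrak{c}_\mathfrak{s}(\mathfrak{r}(\mathfrak{g}))$ it commutes with $\mathfrak{r}(\mathfrak{g})$. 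Therefore $[\mathfrak{g},\mathfrak{c}_{\mathfrak{s}_c}(\mathfrak{r}(\mathfrak{g}))]=[\mathfrak{s},\mathfrak{c}_{\mathfrak{s}_c}(\mathfrak{r}(\mathfrak{g}))]\subset\mathfrak{c}_{\mathfrak{s}_c}(\mathfrak{r}(\mathfrak{g}))$, and the same computation applies to the noncompact piece.

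I do not expect a genuine obstacle: the statement is essentially the assembly of Lemma \ref{lemma 10} with the elementary structure theory of semi-simple Lie algebras. The only points demanding a little care are the bookkeeping that the intersections with $\mathfrak{s}_c$ and $\mathfrak{s}_{nc}$ coincide with the centralizers $\mathfrak{c}_{\mathfrak{s}_c}(\mathfrak{r}(\mathfrak{g}))$ and $\mathfrak{c}_{\mathfrak{s}_{nc}}(\mathfrak{r}(\mathfrak{g}))$, and the observation that an ideal of $\mathfrak{s}_c$ (resp. $\mathfrak{s}_{nc}$) is automatically of compact (resp. noncompact) type, which is what justifies calling these the compact and noncompact parts of $\mathfrak{c}_\mathfrak{s}(\mathfrak{r}(\mathfrak{g}))$.
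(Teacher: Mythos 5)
Your proof is correct and takes essentially the same route as the paper: the paper obtains this corollary by combining Lemma \ref{lemma 10} with the refinement $\mathfrak{c}_\mathfrak{s}(\mathfrak{r}(\mathfrak{g}))=\mathfrak{c}_{\mathfrak{s}_{c}}(\mathfrak{r}(\mathfrak{g}))\oplus\mathfrak{c}_{\mathfrak{s}_{nc}}(\mathfrak{r}(\mathfrak{g}))$ into semi-simple ideals of $\mathfrak{g}$, which it asserts ``by similar arguments'' to those of Lemma \ref{lemma 10}. Your elaboration --- splitting the ideal $\mathfrak{c}_\mathfrak{s}(\mathfrak{r}(\mathfrak{g}))$ along the simple factors of $\mathfrak{s}$, identifying the intersections with the centralizers $\mathfrak{c}_{\mathfrak{s}_{c}}(\mathfrak{r}(\mathfrak{g}))$ and $\mathfrak{c}_{\mathfrak{s}_{nc}}(\mathfrak{r}(\mathfrak{g}))$, and rerunning the ideal-of-$\mathfrak{g}$ computation --- is exactly the bookkeeping the paper leaves implicit.
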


\subsection{Proof of Theorem \ref{theorem 1}}

Fix any $x\in M$, and denote $H$ the isotropy subgroup of $G$ at $x$. The
smooth coset space $G/H$ can be identified with an immersed submanifold in $M$. The submanifold metric on $G/H$ is $G$-invariant. The restriction of the bounded Killing vector field
induced by $X\in\mathfrak{g}$ to $G\cdot x=G/H$ is still a bounded Killing vector
field, induced by the same $X$. By Lemma \ref{lemma 1}, $X\in\mathfrak{g}$ is a bounded vector for $G/H$.

By Lemma \ref{lemma -1}, $\mathfrak{h}$ is compactly imbedded.
We can find a maximal compactly imbedded subalgebra $\mathfrak{k}$ of $\mathfrak{g}$ such that
$\mathfrak{h}\subset\mathfrak{k}\subset\mathfrak{g}$.
Denote $\widetilde{G}$ the universal cover of $G$, $\widetilde{H}$, and~$\widetilde{K}$ the connected subgroup of $\widetilde{G}$ generated by
$\mathfrak{h}$ and $\mathfrak{k}$ respectively. The subgroup~$\widetilde{K}$ is closed because it is the identity component of the
pre-image in $\widetilde{G}$ for a maximal compact subgroup of $G/Z(G)$.
By Lemma \ref{lemma 2} and
Lemma \ref{lemma 3}, $X\in\mathfrak{g}$ is also bounded for
$\widetilde{G}/\widetilde{H}$ and $\widetilde{G}/\widetilde{K}$.

Since $G$ is semi-simple,
we have $\widetilde{G}=\widetilde{G}_c\times\widetilde{G}_{nc}$, where
$\widetilde{G}_c$ and~$\widetilde{G}_{nc}$ are the compact and non-compact parts of $\widetilde{G}$ respectively,
$\widetilde{K}=\widetilde{G}_c\times\widetilde{K}_{nc}$, and
$X=X_c+X_{nc}$ with $X_c\in\mathfrak{g}_c=\mathrm{Lie}(\widetilde{G}_c)$
and $X_{nc}\in\mathfrak{g}_{nc}=\mathrm{Lie}(\widetilde{G}_{nc})$ accordingly. The coset space
$\widetilde{G}/\widetilde{K}=\widetilde{G}_{nc}/\widetilde{K}_{nc}$
is a symmetric space of non-compact type. The vector $X_{nc}\in\mathfrak{g}_{nc}$ defines the
same Killing vector field as $X$ on $\widetilde{G}/\widetilde{K}$, so
it is bounded as well. Since the Riemannian symmetric metric on $\widetilde{G}/\widetilde{K}$ has negative Ricci curvature and
non-positive sectional curvature, the bounded vector $X_{nc}$ must vanish \cite{Wo1964}. So $X=X_c$ is contained in the compact ideal
$\mathfrak{g}_c$ in $\mathfrak{g}$.

This completes the proof of Theorem \ref{theorem 1}.

\subsection{Proof of Theorem \ref{theorem 2}}

The key steps are summarized as the following two claims.
\smallskip

{\bf Claim 1:} $X_s$ is contained in a compact ideal of $\mathfrak{s}$.

The proof of Claim 1 applies a similar method as for Theorem \ref{theorem 1}.

By similar argument as in the proof of Theorem \ref{theorem 1},
we can restrict our discussion to any orbit $G\cdot x=G/H$ in $M$,
where the isotropy subgroup $H$ has a compactly imbedded Lie algebra.
The vector $X$ indicated in Theorem \ref{theorem 2} is
bounded for $G/H$.

The radical $\mathfrak{r}(\mathfrak{g})$ generates a closed normal subgroup $R$ of ${G}$ and its product $RH$ with the compact subgroup ${H}$ is also a closed subgroup. By Lemma \ref{lemma 3}, $X\in\mathfrak{g}$ is bounded for $G/HR$. We can identify $G/HR$
as the orbit space for the left $R$-actions on $G/H$. So
$G/HR$ admits a $G$-invariant metric
induced by submersion. On the other hand, the coset space $G/HR$
can be identified as $S/H_S=(G/R)/(HR/R)$, where the Lie algebra of $S=G/R$ can be identified with $\mathfrak{s}$ by Levi's Theorem,
and $\mathrm{Lie}(H_S)$ is a compactly imbedded subgroup because it is the image of
the compactly imbedded $\mathfrak{h}$ in~$\mathfrak{g}/\mathfrak{r}(\mathfrak{g})$. With this identification, $X$ defines
the same Killing vector field as $X_s$ on~$S/H_S$. By Lemma \ref{lemma 1}, $X_s$ is bounded for $S/H_S$. Now we have the semi-simpleness for $S$ and the compactly imbedded property for $\mathrm{Lie}(H_S)$, so we can apply a similar argument as for Theorem \ref{theorem 1} to prove $X_s$ is contained in a compact ideal of
$\mathfrak{s}$.

This completes the proof of Claim 1.
\smallskip

{\bf Claim 2:} $X$ commutes with the nilradical $\mathfrak{n}$.

To prove this claim,
we still restrict our discussion to a single $G$-orbit. But we need to be careful because the effectiveness is required in later
discussion. The following lemma guarantees that suitable $G$-orbits with effective $G$-actions can be found.

\begin{lemma}\label{lemma 4}
Let $M$ be a connected Riemannian homogeneous space on which a connected Lie group $G$ acts effectively. Then there
exists $x\in M$, such that $G$ acts effectively on $G\cdot x$.
\end{lemma}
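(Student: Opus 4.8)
The plan is to translate effectiveness on an orbit into the triviality of an ineffective kernel, and then to produce one orbit whose induced Killing fields span $\mathfrak g$ faithfully. (The content is in the case where $G$ is not transitive, so that the orbits $G\cdot x$ may be proper.)

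First I would fix $x\in M$, write $G_x$ for the isotropy subgroup, and set $K_x=\bigcap_{g\in G}gG_xg^{-1}$, the largest normal subgroup of $G$ contained in $G_x$. Then $G$ acts effectively on $G\cdot x$ if and only if $K_x=\{e\}$, and $K_x$ depends only on the orbit. On the Lie algebra level $\mathfrak k_x:=\mathrm{Lie}(K_x)$ is the largest ideal of $\mathfrak g$ contained in $\mathfrak h_x$, equivalently $\mathfrak k_x=\{\xi\in\mathfrak g:\ \xi^{*}\equiv0\ \text{on}\ G\cdot x\}$, where $\xi^{*}$ is the Killing field induced by $\xi$. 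Effectiveness of $G$ on $M$ says precisely that $\bigcap_{x\in M}K_x=\{e\}$, hence $\bigcap_{x\in M}\mathfrak k_x=0$; the goal is to realize this intersection at a single point.

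For the infinitesimal part I would proceed as follows. For a nonzero ideal $\mathfrak i\subseteq\mathfrak g$ put $M^{\mathfrak i}=\{p\in M:\ \xi^{*}_{p}=0\ \forall\xi\in\mathfrak i\}$. Since $\mathfrak i$ is an ideal this set is $G$-invariant; as a common zero set of Killing fields it is closed and totally geodesic; and it is a proper subset of $M$ because $\mathfrak i\neq0$ forces some $\xi^{*}\not\equiv0$ by effectiveness. One then checks that $\mathfrak k_x\neq0$ exactly when $G\cdot x\subseteq M^{\mathfrak i}$ for some minimal ideal $\mathfrak i$, so the bad locus is $B=\bigcup_{\mathfrak i\ \mathrm{minimal}}M^{\mathfrak i}$. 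Writing $U$ for the union of orbits of maximal dimension, which is $G$-invariant, open and dense (using properness of the $\overline G$-action below), the fact that each $M^{\mathfrak i}$ is closed and proper forces $\bigcap_{x\in U}\mathfrak k_x=0$, since a nonzero common ideal would have to vanish on the dense set $U$. The remaining, discrete, part is of the same nature: once $\mathfrak k_x=0$ the group $K_x$ is discrete and normal in the connected group $G$, hence central, so it is built from elements $z\in Z(G)\cap G_x$, each of which fixes $G\cdot x$ pointwise and therefore forces $G\cdot x\subseteq\mathrm{Fix}(z)$, again a proper closed totally geodesic $G$-invariant submanifold to be avoided.

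The hard part will be upgrading the statement ``$\bigcap_{x}\mathfrak k_x$ and $\bigcap_x K_x$ are trivial'' to ``a single orbit is already good''. A priori the minimal ideals — especially the abelian ones inside the abelian socle $\mathfrak a$ of $\mathfrak g$ — and the central elements $z$ above may vary in continuous families, so $B$ is an uncountable union of proper submanifolds, and one must ensure that one orbit evades all of them at once. To do this I would exploit the Riemannian and homogeneous structure: passing to the closure $\overline G$ of $G$ in $I(M)$ — for which $\mathfrak g$ is an ideal of $\overline{\mathfrak g}$, so that $\overline G$ normalizes $G$, acts properly, and carries $G$-orbits isometrically onto $G$-orbits — I would use a slice for the proper $\overline G$-action near a maximal-dimensional orbit to reduce $\mathfrak a$ to finitely many isotypic blocks and to realize $\mathfrak k_x\cap\mathfrak a=0$ by the nonvanishing of finitely many minors of the evaluation $\xi\mapsto(\xi^{*}_{p})_{p\in G\cdot x}$; since these kernels intersect to $0$, the evaluation is injective somewhere, and a point with $\mathfrak k_x=0$ and no central element in $G_x$ exists. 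This is exactly where homogeneity and isometry are indispensable: without them one cannot rule out effective actions possessing no effective orbit at all, such as a flow all of whose orbits are periodic with unbounded periods.
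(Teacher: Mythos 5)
Your setup is correct as far as it goes: the reduction to the normal cores $K_x=\bigcap_{g\in G}gG_xg^{-1}$ and their ideal counterparts $\mathfrak k_x$, the description of the bad locus $B=\bigcup_{\mathfrak i}M^{\mathfrak i}$, and the honest observation that $B$ may be an uncountable union of proper closed submanifolds (so Baire-type or countable-union arguments cannot finish) all check out. But the step that is supposed to close the proof is a non sequitur: from $\bigcap_{x\in U}\mathfrak k_x=0$ you conclude that ``the evaluation is injective somewhere,'' i.e. that $\mathfrak k_x=0$ for a single $x$. An intersection of kernels over a family of points being zero does not force any individual kernel to be zero --- that implication is precisely the lemma, and the slice/``finitely many minors''/``isotypic blocks'' sketch never supplies a mechanism for it. The same quantifier problem recurs, also unresolved, in your discrete step: once $\mathfrak k_x=0$ you must still avoid $\mathrm{Fix}(z)$ for what may be a continuum of central elements $z$, and nothing in the outline produces an orbit evading all of these at once.

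The idea you are missing --- and the one the paper uses --- is the Principal Orbit Theorem for the proper action of the closure $\overline G\subset I(M)$ (Proposition 3.62 and Theorem 3.82 in \cite{AB2015}): the union $\mathcal U$ of principal $\overline G$-orbits is open and dense, and on any two principal orbits one can choose representatives $x$ and $y'$ whose isotropy groups are \emph{literally equal}, $\overline G_{x}=\overline G_{y'}$, not merely conjugate. Hence if $g\in G$ acts trivially on one $G$-orbit $G\cdot x\subset\mathcal U$, then by density and continuity it acts trivially on $\overline G\cdot x$, so $g$ lies in the normal core of $\overline G_{x}$ in $\overline G$; that core coincides with the normal core of $\overline G_{y'}$ for every other principal orbit, so $g$ acts trivially on all of $\mathcal U$, hence on $M$, hence $g=e$ by effectiveness. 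In other words, one does not need an orbit that evades the uncountable family $B$: the ineffective kernels of all principal orbits are one and the same subgroup, which is trivial because the action on the dense set $\mathcal U$ is effective. This group-level argument also disposes of your infinitesimal and discrete parts simultaneously. Your slice idea does point at the right machinery --- the slice theorem is what underlies the Principal Orbit Theorem --- but the consequence you need from it is the (conjugacy-)constancy of isotropy groups on the principal stratum, not injectivity of an evaluation map certified by minors.
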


\begin{proof}
Denote $\overline{G}$ the closure of $G$ in $I(M)$. Then
the $\overline{G}$-action on $M$ is proper (see Proposition 3.62 in \cite{AB2015}). By the Principal Orbit Theorem (see Theorem 3.82 in \cite{AB2015}),
the principal orbit
type for the $\overline{G}$-action is unique up to conjugations, and the union $\mathcal{U}$ of all principal orbits is open dense in $M$.

Let $G\cdot x$ be any $G$-orbit in $\mathcal{U}$, and assume $g\in G$ acts trivially on $G\cdot x$. Because $G\cdot x$ is dense in
$\overline{G}\cdot x$, $g$ acts trivially on $\overline{G}\cdot x$
as well. Now we consider any other orbit $\overline{G}\cdot y$ in $\mathcal{U}$. The point $y$ can be suitably chosen such that $x$ and $y$ have the same isotropy subgroups $\overline{G}_x=\overline{G}_y$ in $\overline{G}$. Then their isotropy subgroups in $G$ are the same because $G_x=\overline{G}_x\cap G=\overline{G}_y\cap G=G_y$. The $g$-action on $G\cdot y$ is trivial, and by continuity, that on $\overline{G}\cdot y$ is trivial as well. This argument proves that $g$ acts trivially on the dense open subset $\mathcal{U}$ in $M$, so it acts trivially
on $M$.
Due to the effectiveness of the $G$-action, we must have $g=e\in G$.

To summarize, the $G$-action on $G\cdot x\subset\mathcal{U}$ is effective, which completes the proof of this lemma.
\end{proof}
\smallskip

Take the orbit $G\cdot x=G/H$ indicated in Lemma \ref{lemma 4},
endowed with the invariant submanifold metric.
Since $X\in\mathfrak{g}$ defines a bounded Killing vector field on the whole manifold,
it also defines a bounded Killing vector field when restricted to $G\cdot x$. So by Lemma \ref{lemma 1}, $X$ is
a bounded vector for $G/H$.

By Lemma \ref{lemma 0}, we have $B_\mathfrak{g}$-orthogonal reductive decomposition $\mathfrak{g}=\mathfrak{h}+\mathfrak{m}$ with $\mathfrak{n}(\mathfrak{g})\subset\mathfrak{m}$.
Denote $X_\mathfrak{m}$ the $\mathfrak{m}$-component of $X$. For any $Y\in\mathfrak{n}(\mathfrak{g})$,
\begin{equation}\label{001}
\mathrm{pr}_\mathfrak{m}(\mathrm{Ad}(\exp(tY))X)
=X_\mathfrak{m}+t\,[Y,X]+\frac{t^2}{2!}\,[Y,[Y,X]]+\cdots,
\end{equation}
in which all terms except the first one in the right side are contained in $\mathfrak{n}(\mathfrak{g})$. Since
$\mathfrak{n}(\mathfrak{g})$ is nilpotent, the right side of (\ref{001}) is in fact a vector-valued polynomial with respect to~$t$.
If it has a positive degree, we can get $$\lim_{t\rightarrow\infty}
\|\mathrm{pr}_\mathfrak{m}\bigl(\mathrm{Ad}(\exp(tY))X)\bigr)\|=+\infty,$$
for any norm $\|\cdot\|$ on $\mathfrak{m}$. This is a
contradiction to the boundedness of $X$ for $G/H$. So we get $[X,Y]=0$ for any $Y\in\mathfrak{n}(\mathfrak{g})$ which proves Claim 2.
\smallskip

Finally, we finish the proof of Theorem \ref{theorem 2}.  Claim 2 indicates that
$X\in\mathfrak{c}_\mathfrak{g}(\mathfrak{n}(\mathfrak{g}))$. By Lemma \ref{lemma 10}
or Corollary \ref{lemma 5}, we have $X_r\in\mathfrak{c}_{\mathfrak{r}(\mathfrak{g})}
=\mathfrak{c}(\mathfrak{n}(\mathfrak{g}))$ and $X_s\in
\mathfrak{c}_{\mathfrak{s}}(\mathfrak{n}(\mathfrak{g}))=\mathfrak{c}_{\mathfrak{s}_c}
(\mathfrak{r}(\mathfrak{g}))\oplus\mathfrak{c}_{\mathfrak{s}_{nc}}
(\mathfrak{r}(\mathfrak{g}))$. Claim 1 indicates $X_s$ is contained in the compact semi-simple
ideal $\mathfrak{c}_{\mathfrak{s}_c}
(\mathfrak{r}(\mathfrak{g}))$ of $\mathfrak{g}$.

This finishes the proof of Theorem \ref{theorem 2}.
\section{Applications of Theorem \ref{theorem 2}}
\subsection{Jordan decomposition and spectral property for
bounded Killing vector fields}

Theorem \ref{theorem 2} and Lemma \ref{lemma 10} provide the following obvious observations for $X=X_r+X_s\in\mathfrak{g}$ which defines a bounded
Killing vector field:
\begin{enumerate}
\item The linear endomorphism $\mathrm{ad}(X_s)\in\mathfrak{gl}(\mathfrak{g})$
is semi-simple with only imaginary eigenvalues;
\item The linear endomorphism
$\mathrm{ad}(X_r)\in\mathfrak{gl}(\mathfrak{g})$ is nilpotent, i.e. it has only zero eigenvalues;
\item These two endomorphisms commute because
$[X_r,X_s]=0$.
\item By a suitable conjugation, we can present
$\mathrm{ad}(X)$, $\mathrm{ad}({X_r})$ and $\mathrm{ad}({X_s})$ as upper triangular, strict upper triangular and diagonal matrices respectively.
So $\mathrm{ad}(X)\in\mathfrak{gl}(\mathfrak{g})$ has the same
    eigenvalues (counting multiples) as $\mathrm{ad}(X_s)$.
\item The centralizer $\mathfrak{c}_{\mathfrak{s}_c}(\mathfrak{r}(\mathfrak{g}))$ containing $X_s$ is an compact semi-simple ideal
of $\mathfrak{g}$ contained in the intersection of all Levi subalgebras.
\end{enumerate}

The observations (1)--(3)
 imply $\mathrm{ad}(X)=\mathrm{ad}(X_s)
+\mathrm{ad}(X_r)$ is a Jordan--Chevalley decomposition, and hence
$X=X_s+X_r$ is an abstract Jordan decomposition. See 4.2 and 5.4 in \cite{Hu1972} for a comprehensive discussion of these notions.

The observation (4) explains why $\mathrm{ad}(X)$ has only imaginary eigenvalues, which solves our spectral problem
for bounded Killing vector fields.

Notice that the decomposition $\mathrm{ad}(X)=\mathrm{ad}(X_r)+\mathrm{ad}(X_s)$ is unique by
the uniqueness of Jordan--Chevalley decomposition, while the abstract Jordan decomposition may not be because of the
center $\mathfrak{c}(\mathfrak{g})$. However, by the observation (5), the decomposition
$X=X_r+X_s$ is unique in the sense that it does not depends on
the choice of the Levi subalgebra.

Above observations and discussions can be summarized to the following theorem.

\begin{theorem}\label{main-cor}
Let $M$ be a connected Riemannian manifold on which the connected Lie group $G$ acts effectively and isometrically.
Assume that $X\in\mathfrak{g}$ defines a bounded Killing vector field. Let $X$ be decomposed as $X=X_r+X_s$  according to any Levi
decomposition $\mathfrak{g}=\mathfrak{r}(\mathfrak{g})+\mathfrak{s}$, then we have the following:
\begin{enumerate}
\item The decomposition $\mathrm{ad}(X)=\mathrm{ad}(X_r)+\mathrm{ad}(X_s)$ is the unique Jordan--Chevalley decomposition for $\mathrm{ad}(X)$ in
    $\mathfrak{gl}(\mathfrak{g})$;
\item The decomposition $X=X_r+X_s$ is the abstract
Jordan decomposition which is unique in the sense that $X_s$ is
contained in all Levi subalgebras, i.e. this decomposition is
irrelevant to the choice of the Levi subalgebra;
\item The eigenvalues of $\mathrm{ad}(X)$ coincide with
those of those of $\mathrm{ad}(X_s)$, counting multiples.
\end{enumerate}
\end{theorem}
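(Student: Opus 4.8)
The plan is to derive all three statements from Theorem \ref{theorem 2} together with the uniqueness of the Jordan--Chevalley decomposition, by first pinning down the spectral nature of each of $\mathrm{ad}(X_s)$ and $\mathrm{ad}(X_r)$ as endomorphisms of the whole $\mathfrak{g}$, and only then assembling them.

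First I would show that $\mathrm{ad}(X_s)$ is semi-simple with purely imaginary eigenvalues. By Theorem \ref{theorem 2}(1), $X_s$ lies in the compact semi-simple ideal $\mathfrak{c}_{\mathfrak{s}_c}(\mathfrak{r}(\mathfrak{g}))$, which is a Lie algebra direct summand of $\mathfrak{g}$ by Corollary \ref{lemma 5}. Writing $\mathfrak{g}=\mathfrak{c}_{\mathfrak{s}_c}(\mathfrak{r}(\mathfrak{g}))\oplus\mathfrak{j}$ as a sum of ideals, one has $[X_s,\mathfrak{j}]=0$, so $\mathrm{ad}(X_s)$ vanishes on $\mathfrak{j}$ and restricts on the compact factor to the adjoint action of an element of a compact Lie algebra; the latter is skew-symmetric with respect to an invariant inner product, hence semi-simple with imaginary spectrum. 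Therefore $\mathrm{ad}(X_s)$ on all of $\mathfrak{g}$ is semi-simple with only imaginary eigenvalues.

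Next I would verify that $\mathrm{ad}(X_r)$ is nilpotent on $\mathfrak{g}$. By Theorem \ref{theorem 2}(2) and the identity $\mathfrak{c}_{\mathfrak{r}(\mathfrak{g})}(\mathfrak{n}(\mathfrak{g}))=\mathfrak{c}(\mathfrak{n}(\mathfrak{g}))\subset\mathfrak{n}(\mathfrak{g})$ from Lemma \ref{lemma 10}, the vector $X_r$ lies in the nilradical. Since $\mathfrak{n}(\mathfrak{g})$ is an ideal, $\mathrm{ad}(X_r)$ maps $\mathfrak{g}$ into $\mathfrak{n}(\mathfrak{g})$, and since $\mathfrak{n}(\mathfrak{g})$ is nilpotent its restriction there is nilpotent by Engel's theorem; composing these two facts gives $\mathrm{ad}(X_r)^{k}=0$ for some $k$. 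Moreover $[X_s,X_r]=0$, because $X_s\in\mathfrak{c}_{\mathfrak{s}_c}(\mathfrak{r}(\mathfrak{g}))$ commutes with $\mathfrak{r}(\mathfrak{g})\ni X_r$; hence $\mathrm{ad}(X_s)$ and $\mathrm{ad}(X_r)$ commute.

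With these facts in hand the three conclusions follow quickly. For (1), the identity $\mathrm{ad}(X)=\mathrm{ad}(X_s)+\mathrm{ad}(X_r)$ exhibits $\mathrm{ad}(X)$ as a sum of a commuting semi-simple part and nilpotent part, so by the uniqueness of the Jordan--Chevalley decomposition in $\mathfrak{gl}(\mathfrak{g})$ it is exactly that decomposition. This also yields (2), since $X=X_s+X_r$ is then an abstract Jordan decomposition, and Lemma \ref{lemma 10}(2) places $X_s$ in the intersection of all Levi subalgebras, giving the asserted independence of the choice of $\mathfrak{s}$. For (3), simultaneously triangularizing the commuting pair $\mathrm{ad}(X_s),\mathrm{ad}(X_r)$ over $\mathbb{C}$ puts $\mathrm{ad}(X_r)$ strictly above the diagonal and $\mathrm{ad}(X_s)$ on the diagonal, so the diagonal of $\mathrm{ad}(X)$ agrees with that of $\mathrm{ad}(X_s)$ and the two endomorphisms share the same eigenvalues with multiplicities. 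I expect the only genuinely delicate point to be the passage from semisimplicity and nilpotency of the restrictions to the corresponding property on all of $\mathfrak{g}$; once that is clean, the remainder is a formal appeal to Jordan--Chevalley uniqueness.
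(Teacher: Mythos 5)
Your proposal is correct and takes essentially the same route as the paper: both arguments extract from Theorem \ref{theorem 2} together with Lemma \ref{lemma 10}/Corollary \ref{lemma 5} that $\mathrm{ad}(X_s)$ is semi-simple with purely imaginary eigenvalues (since $X_s$ lies in the compact semi-simple ideal $\mathfrak{c}_{\mathfrak{s}_c}(\mathfrak{r}(\mathfrak{g}))$), that $\mathrm{ad}(X_r)$ is nilpotent (since $X_r\in\mathfrak{c}(\mathfrak{n}(\mathfrak{g}))\subset\mathfrak{n}(\mathfrak{g})$), and that the two commute, and then conclude (1)--(3) by Jordan--Chevalley uniqueness, by the fact that $\mathfrak{c}_{\mathfrak{s}_c}(\mathfrak{r}(\mathfrak{g}))$ lies in every Levi subalgebra, and by simultaneous triangularization. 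The only difference is cosmetic: you spell out justifications (the skew-symmetry of $\mathrm{ad}(X_s)$ on the compact ideal, the composition argument for nilpotency) that the paper lists as ``obvious observations,'' which is a harmless refinement.
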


\newpage

\subsection{Bounded Killing vectors on a connected Riemannian homogeneous space}

In this section, we will always assume that
$M=G/H$ is a Riemannian homogeneous space on which the connected Lie group $G$ acts effectively. Applying Theorem \ref{theorem 2} and some argument in its proof, we can completely determine all the bounded vectors for $G/H$ as following.

Let $\mathfrak{g}=\mathfrak{r}(\mathfrak{g})+\mathfrak{s}$ be
a Levi decomposition, and $\mathfrak{s}=\mathfrak{s}_{c}\oplus
\mathfrak{s}_{nc}$ be a Lie algebra direct sum decomposition.
By Lemma \ref{lemma 0}, we have a reductive decomposition
$\mathfrak{g}=\mathfrak{h}+\mathfrak{m}$ such that the nilradical
$\mathfrak{n}(\mathfrak{g})$ is contained in $\mathfrak{m}$.

We have mentioned that $\mathfrak{c}_{\mathfrak{s}_c}
(\mathfrak{r}(\mathfrak{g}))$ is an ideal of $\mathfrak{g}$
contained in $\mathfrak{s}_c$. Denote $\mathfrak{s}'_c$ the ideal
of $\mathfrak{s}_c$ such that $\mathfrak{s}_c=\mathfrak{c}_{\mathfrak{s}_c}
(\mathfrak{r}(\mathfrak{g}))\oplus\mathfrak{s}'_c$. Then
we have a Lie algebra direct sum decomposition
\begin{equation}\label{008}
\mathfrak{g}=\mathfrak{c}_{\mathfrak{s}_c}
(\mathfrak{r}(\mathfrak{g}))\oplus (\mathfrak{s}'_c+\mathfrak{s}_{nc}+\mathfrak{r}(\mathfrak{g})).
\end{equation}
By this observation, we find the following lemma.

\begin{lemma}\label{lemma 6}
Keep all assumptions and notations of this section, then
any vector in~$\mathfrak{c}_{\mathfrak{s}_c}
(\mathfrak{r}(\mathfrak{g}))$ is bounded for $G/H$.
\end{lemma}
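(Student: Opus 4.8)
The plan is to show that the ideal $\mathfrak{c}_{\mathfrak{s}_c}(\mathfrak{r}(\mathfrak{g}))$ is a compact semi-simple ideal of $\mathfrak{g}$ (established in Lemma \ref{lemma 10}(1) together with the compactness remark preceding this lemma), and then exploit the Lie algebra direct sum decomposition (\ref{008}) to reduce the boundedness question on $G/H$ to a boundedness question on a compact coset space, where boundedness is automatic.

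First I would record the structural facts already available. By Lemma \ref{lemma 10}, the summand $\mathfrak{c}_{\mathfrak{s}}(\mathfrak{r}(\mathfrak{g}))$ is a semi-simple ideal of $\mathfrak{g}$, and by the argument following that lemma it splits as $\mathfrak{c}_{\mathfrak{s}_c}(\mathfrak{r}(\mathfrak{g}))\oplus\mathfrak{c}_{\mathfrak{s}_{nc}}(\mathfrak{r}(\mathfrak{g}))$, the first summand being a \emph{compact} semi-simple ideal of $\mathfrak{g}$. The decomposition (\ref{008}) exhibits $\mathfrak{c}_{\mathfrak{s}_c}(\mathfrak{r}(\mathfrak{g}))$ as a direct Lie algebra factor, with complementary ideal $\mathfrak{g}' = \mathfrak{s}'_c+\mathfrak{s}_{nc}+\mathfrak{r}(\mathfrak{g})$. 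The key consequence I want is that these two ideals commute, so for $X\in\mathfrak{c}_{\mathfrak{s}_c}(\mathfrak{r}(\mathfrak{g}))$ the $\mathrm{Ad}(G)$-orbit of $X$ is controlled only by the subgroup generated by $\mathfrak{c}_{\mathfrak{s}_c}(\mathfrak{r}(\mathfrak{g}))$ itself.

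Next I would make this precise at the group level. Let $X\in\mathfrak{c}_{\mathfrak{s}_c}(\mathfrak{r}(\mathfrak{g}))$ and write $K_0$ for the connected subgroup of $G$ generated by the compact ideal $\mathfrak{c}_{\mathfrak{s}_c}(\mathfrak{r}(\mathfrak{g}))$. Because $\mathfrak{c}_{\mathfrak{s}_c}(\mathfrak{r}(\mathfrak{g}))$ is a compact semi-simple ideal, it generates a compact subgroup in $\mathrm{Inn}(\mathfrak{g})$; equivalently, $\mathrm{Ad}_\mathfrak{g}(K_0)$ has compact closure in $\mathrm{Aut}(\mathfrak{g})$. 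Since the complementary factor $\mathfrak{g}'$ commutes with $X$, any $g\in G$ acts on $X$ only through its $K_0$-component: writing $\mathrm{Ad}(g)X$ and using $[\mathfrak{g}',X]=0$, the set $\{\mathrm{Ad}(g)X : g\in G\}$ is contained in $\{\mathrm{Ad}(k)X : k\in K_0\}$, which is the image of $X$ under a relatively compact group of automorphisms, hence a bounded subset of $\mathfrak{g}$. Applying the continuous projection $\mathrm{pr}_{\mathfrak{g}/\mathfrak{h}}$ (which maps bounded sets to bounded sets, as noted before Lemma \ref{lemma 3}), the function $f(g)=\|\mathrm{pr}_{\mathfrak{g}/\mathfrak{h}}(\mathrm{Ad}(g)X)\|$ from (\ref{002}) is bounded, so $X$ is a bounded vector for $G/H$.

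The step I expect to require the most care is the passage from ``$\mathfrak{c}_{\mathfrak{s}_c}(\mathfrak{r}(\mathfrak{g}))$ is compactly imbedded'' to ``the $\mathrm{Ad}(G)$-orbit of $X$ is bounded.'' The subtlety is that $G$ need not be simply connected or a direct product matching the Lie algebra splitting, so the subgroup $K_0$ generated by the compact ideal need not itself be compact inside $G$; only its image $\mathrm{Ad}_\mathfrak{g}(K_0)$ in $\mathrm{Inn}(\mathfrak{g})$ is guaranteed to have compact closure. I would handle this by arguing entirely at the level of $\mathrm{Ad}$-actions on $\mathfrak{g}$: what matters for boundedness of $f$ is the orbit $\mathrm{Ad}(G)X\subset\mathfrak{g}$, and since the complementary ideal acts trivially on $X$ while the compact ideal acts through a relatively compact group of automorphisms, the orbit is relatively compact regardless of the global topology of $G$. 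This is exactly the kind of reduction that the decomposition (\ref{008}) is designed to supply, and it is why the authors isolate the independent direct factor $\mathfrak{c}_{\mathfrak{s}_c}(\mathfrak{r}(\mathfrak{g}))$ before stating the lemma.
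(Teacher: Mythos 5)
Your proposal is correct and follows essentially the same route as the paper: the paper likewise observes that the ideal $\mathfrak{c}_{\mathfrak{s}_c}(\mathfrak{r}(\mathfrak{g}))$ generates a compact semi-simple subgroup of $G$, so that $\mathrm{Ad}(G)X$ reduces (the complementary ideal in (\ref{008}) acting trivially on $X$) to the orbit of $X$ under that compact group, a compact set whose projection to $\mathfrak{g}/\mathfrak{h}$ is bounded. Your extra caution about the subgroup $K_0$ possibly failing to be compact inside $G$ is not actually needed --- by Weyl's theorem a semi-simple Lie algebra of compact type generates a compact subgroup regardless of the global topology of $G$ --- but your workaround via the relatively compact image $\mathrm{Ad}_\mathfrak{g}(K_0)\subset\mathrm{Aut}(\mathfrak{g})$ is an equally valid way to finish the same argument.
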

\begin{proof}
The ideal
$\mathfrak{c}_{\mathfrak{s}_c}(\mathfrak{r}(\mathfrak{g}))$
generates a compact semi-simple subgroup in $G$. So for any
$X\in\mathfrak{c}_{\mathfrak{s}_c}(\mathfrak{r}(\mathfrak{g}))$,
the orbit
$$
\mathrm{Ad}(G)X=\mathrm{Ad}\bigl(\exp\mathfrak{c}_{\mathfrak{s}_c}
(\mathfrak{r}(\mathfrak{g}))\bigr)
$$ is a compact set, which projection
in $\mathfrak{g}/\mathfrak{h}$ is obviously bounded with respect
to any norm. So any vector $X\in\mathfrak{c}_{\mathfrak{s}_c}
(\mathfrak{r}(\mathfrak{g}))$ is bounded for $G/H$, which proves this lemma.
\end{proof}
\smallskip

Obviously linear combinations of bounded vectors for $G/H$ are still bounded vectors for $G/H$, i.e. the set of all bounded vectors for $G/H$ is a real linear subspace of $\mathfrak{g}$. It is preserved by all
$\mathrm{Ad}(G)$-actions. So it is an ideal of $\mathfrak{g}$.
Applying Theorem \ref{theorem 2} and Lemma \ref{lemma 6}, we get the following immediate consequence.

\begin{theorem}\label{main-cor-2}
Assume $G/H$ is a Riemannian homogeneous space on which the connected Lie group $G$ acts effectively. Then the space of
all bounded vectors for $G/H$ is a compact ideal of $\mathfrak{g}$. Its
semi-simple part is coincides with
$\mathfrak{c}_{\mathfrak{s}_c}(\mathfrak{r}(\mathfrak{g}))$.
Its Abelian part~$\mathfrak{v}$ is contained in $\mathfrak{c}(\mathfrak{n}(\mathfrak{g}))$.
\end{theorem}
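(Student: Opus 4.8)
The plan is to prove Theorem \ref{main-cor-2} by combining Theorem \ref{theorem 2}, Lemma \ref{lemma 6}, and the structural decomposition \eqref{008}, thereby identifying both the semi-simple and the Abelian parts of the space $\mathfrak{b}$ of all bounded vectors for $G/H$. First I would record the elementary fact already noted in the text: since any $\mathbb{R}$-linear combination of bounded vectors is bounded (because the norm in \eqref{002} is subadditive and positively homogeneous, so $f_{aX+bY}\le |a|f_X+|b|f_Y$), the set $\mathfrak{b}$ is a linear subspace, and since $f$ is defined through the $\mathrm{Ad}(G)$-orbit it is $\mathrm{Ad}(G)$-invariant, hence $\mathfrak{b}$ is an ideal of $\mathfrak{g}$.

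Next I would pin down the two halves. For the semi-simple part, Lemma \ref{lemma 6} shows $\mathfrak{c}_{\mathfrak{s}_c}(\mathfrak{r}(\mathfrak{g}))\subset\mathfrak{b}$, and conversely Theorem \ref{theorem 2}(1) shows that for any $X=X_r+X_s\in\mathfrak{b}$ the semisimple component satisfies $X_s\in\mathfrak{c}_{\mathfrak{s}_c}(\mathfrak{r}(\mathfrak{g}))$. Using the direct sum decomposition \eqref{008}, namely $\mathfrak{g}=\mathfrak{c}_{\mathfrak{s}_c}(\mathfrak{r}(\mathfrak{g}))\oplus(\mathfrak{s}'_c+\mathfrak{s}_{nc}+\mathfrak{r}(\mathfrak{g}))$, the projection of $X$ onto the first summand is exactly $X_s$ (since $X_r\in\mathfrak{r}(\mathfrak{g})$ lies in the second summand and the $\mathfrak{s}_s$-free part of $X_s$ vanishes), so the projection of $\mathfrak{b}$ onto $\mathfrak{c}_{\mathfrak{s}_c}(\mathfrak{r}(\mathfrak{g}))$ equals the whole ideal. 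This identifies the semi-simple part of $\mathfrak{b}$ as precisely $\mathfrak{c}_{\mathfrak{s}_c}(\mathfrak{r}(\mathfrak{g}))$, and the independence of the Levi subalgebra is inherited from Lemma \ref{lemma 10}(2).

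For the Abelian part I would argue that $\mathfrak{b}$ splits as a Lie algebra direct sum of its semi-simple ideal $\mathfrak{c}_{\mathfrak{s}_c}(\mathfrak{r}(\mathfrak{g}))$ and a complementary Abelian ideal $\mathfrak{v}$. The radical components collect into $\mathfrak{v}:=\mathfrak{b}\cap\mathfrak{r}(\mathfrak{g})$; by Theorem \ref{theorem 2}(2) every $X_r$ arising from a bounded $X$ lies in $\mathfrak{c}(\mathfrak{n}(\mathfrak{g}))$, so $\mathfrak{v}\subset\mathfrak{c}(\mathfrak{n}(\mathfrak{g}))$, which is Abelian by Lemma \ref{lemma 10}(1). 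That $\mathfrak{v}$ is genuinely a complement follows because $\mathfrak{c}_{\mathfrak{s}_c}(\mathfrak{r}(\mathfrak{g}))$ commutes with $\mathfrak{r}(\mathfrak{g})\supset\mathfrak{v}$, giving a Lie algebra direct sum rather than merely a vector space one. Finally, to conclude $\mathfrak{b}$ is a \emph{compact} subalgebra, I would invoke the spectral consequence of Theorem \ref{main-cor}: every $X\in\mathfrak{b}$ has $\mathrm{ad}(X)$ with purely imaginary eigenvalues, and the semi-simple summand is a compact ideal while the Abelian summand sits inside the center $\mathfrak{c}(\mathfrak{n}(\mathfrak{g}))$ and is therefore trivially compactly imbedded.

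The main obstacle I anticipate is verifying cleanly that $\mathfrak{b}$ decomposes as an \emph{internal Lie algebra direct sum} of the semi-simple and Abelian parts, rather than just showing the two projections have the claimed images. One must confirm that $\mathfrak{v}=\mathfrak{b}\cap\mathfrak{r}(\mathfrak{g})$ is itself an ideal of $\mathfrak{g}$ (not only of $\mathfrak{b}$) and that no "mixed" bounded vector with nontrivial components in both $\mathfrak{c}_{\mathfrak{s}_c}(\mathfrak{r}(\mathfrak{g}))$ and its complement escapes the analysis; this is where the commuting relation $[\mathfrak{c}_{\mathfrak{s}_c}(\mathfrak{r}(\mathfrak{g})),\mathfrak{r}(\mathfrak{g})]=0$ from Lemma \ref{lemma 10} does the essential work, ensuring the decomposition \eqref{008} restricts compatibly to $\mathfrak{b}$. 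The finer description of $\mathfrak{v}$ in terms of imaginary weights is deferred to the later Theorem \ref{main-cor-3}, so here I would only need the inclusion $\mathfrak{v}\subset\mathfrak{c}(\mathfrak{n}(\mathfrak{g}))$.
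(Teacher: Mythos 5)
Your proposal is correct and follows essentially the same route as the paper: the paper's own proof consists exactly of the observation that bounded vectors form an $\mathrm{Ad}(G)$-invariant linear subspace, hence an ideal, combined with Theorem~\ref{theorem 2} and Lemma~\ref{lemma 6}, with the direct-sum structure left implicit. One caveat on your final step: the claim that the Abelian summand is ``trivially compactly imbedded'' because it lies in $\mathfrak{c}(\mathfrak{n}(\mathfrak{g}))$ is a non sequitur --- a subspace of $\mathfrak{c}(\mathfrak{n}(\mathfrak{g}))$ need not be compactly imbedded in $\mathfrak{g}$ (e.g.\ for $\mathfrak{g}=\mathrm{span}\{t,x,y\}$ with $[t,x]=x$, $[t,y]=-y$, the line $\mathbb{R}x$ lies in $\mathfrak{c}(\mathfrak{n}(\mathfrak{g}))$ but $\mathrm{ad}(x)$ is nonzero nilpotent, so $\exp(s\,\mathrm{ad}(x))$ is unbounded). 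What the inclusion $\mathfrak{v}\subset\mathfrak{c}(\mathfrak{n}(\mathfrak{g}))$ actually buys is that $\mathfrak{v}$ is Abelian, so that $\mathfrak{c}_{\mathfrak{s}_c}(\mathfrak{r}(\mathfrak{g}))\oplus\mathfrak{v}$ is a compact Lie algebra in the abstract sense (compact semisimple plus Abelian), which is all the theorem asserts and which your decomposition already establishes; the stronger compactly-imbedded property of $\mathfrak{v}$ is not immediate here and is what the paper's later Lemma~\ref{lemma 7} supplies.
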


Before we continue to determine all the bounded vectors, there are several remarks.

For some Riemannian homogeneous spaces, bounded Killing vector fields can only be found from $\mathfrak{c}(\mathfrak{n}(\mathfrak{g}))$. For example,

\begin{corollary}
Let $G/H$ be a Riemannian homogeneous space which is diffeomorphic to an Euclidean space on which the connected Lie group $G$ acts effectively. Assume that $X\in\mathfrak{g}$ defines a bounded Killing vector field, then $X\in\mathfrak{c}(\mathfrak{n}(\mathfrak{g}))$.
\end{corollary}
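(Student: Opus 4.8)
The plan is to combine Theorem~\ref{main-cor-2} with the special feature of the ambient space, namely that $G/H$ being diffeomorphic to a Euclidean space forces $H$ to be a maximal compactly imbedded subgroup. The strategy is to show that the semi-simple part $\mathfrak{c}_{\mathfrak{s}_c}(\mathfrak{r}(\mathfrak{g}))$ of the space of bounded vectors must vanish; once this is done, Theorem~\ref{main-cor-2} immediately gives that every bounded vector lies in the Abelian part $\mathfrak{v}\subset\mathfrak{c}(\mathfrak{n}(\mathfrak{g}))$, which is exactly the desired conclusion.

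First I would observe that $\mathfrak{c}_{\mathfrak{s}_c}(\mathfrak{r}(\mathfrak{g}))$ is a compact semi-simple ideal of $\mathfrak{g}$, hence it generates a compact connected normal subgroup $C$ of $G$ (after passing to the universal cover, via Lemma~\ref{lemma 2}, one may assume $C$ is a compact simply connected semi-simple subgroup). Since $G/H$ is diffeomorphic to $\mathbb{R}^n$, it is contractible, so it carries no nontrivial topology that a compact subgroup could act on without fixed issues; more precisely, the orbit $C\cdot o$ of a compact group in a contractible manifold, together with the fact that $C$ is normal, forces $C\subset H$. The cleanest way to see this: by Lemma~\ref{lemma 0}, the restriction of the Killing form $B_\mathfrak{g}$ to $\mathfrak{h}$ is negative definite, so $\mathfrak{h}$ is compactly imbedded, and a Euclidean (hence contractible, and in particular with vanishing rational cohomology in positive degrees) homogeneous space $G/H$ forces $H$ to contain a maximal compactly imbedded subgroup of $G$. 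A compact semi-simple normal ideal $\mathfrak{c}_{\mathfrak{s}_c}(\mathfrak{r}(\mathfrak{g}))$ is automatically contained in every maximal compactly imbedded subalgebra, hence in $\mathfrak{h}$.

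The main obstacle is making rigorous the claim that $\mathfrak{c}_{\mathfrak{s}_c}(\mathfrak{r}(\mathfrak{g}))\subset\mathfrak{h}$ from the Euclidean (contractibility) hypothesis. The key point is that if this compact semi-simple ideal were \emph{not} contained in $\mathfrak{h}$, then the compact normal subgroup $C$ it generates would have an orbit $C\cdot o\cong C/(C\cap H)$ of positive dimension inside $G/H$; being a compact homogeneous space this orbit has nontrivial rational cohomology (for a semi-simple compact group a nontrivial quotient is never rationally acyclic, e.g.\ it has nonzero top-degree cohomology), contradicting the contractibility of the ambient Euclidean space via a retraction or fixed-point argument. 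Alternatively, and more algebraically, one argues that a compact normal subgroup acting on a contractible homogeneous space must act trivially on it by a fixed-point argument, and then effectiveness of the $G$-action forces $C$ to be trivial as a transformation group, i.e.\ $\mathfrak{c}_{\mathfrak{s}_c}(\mathfrak{r}(\mathfrak{g}))\subset\mathfrak{h}$.

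Once $\mathfrak{c}_{\mathfrak{s}_c}(\mathfrak{r}(\mathfrak{g}))\subset\mathfrak{h}$ is established, I would invoke the effectiveness hypothesis to conclude $\mathfrak{c}_{\mathfrak{s}_c}(\mathfrak{r}(\mathfrak{g}))=0$: an ideal of $\mathfrak{g}$ contained in $\mathfrak{h}$ generates a normal subgroup lying in the isotropy, and by effectiveness such a normal subgroup must be trivial. Therefore the semi-simple part of the space of bounded vectors vanishes, and Theorem~\ref{main-cor-2} yields that the full space of bounded vectors equals its Abelian part $\mathfrak{v}\subset\mathfrak{c}(\mathfrak{n}(\mathfrak{g}))$. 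Since $X$ defines a bounded Killing vector field, it is a bounded vector for $G/H$ by Lemma~\ref{lemma 1}, so $X\in\mathfrak{c}(\mathfrak{n}(\mathfrak{g}))$, which completes the argument.
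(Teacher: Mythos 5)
Your proposal has the same skeleton as the paper's proof: let $C$ be the compact connected normal subgroup generated by the ideal $\mathfrak{c}_{\mathfrak{s}_c}(\mathfrak{r}(\mathfrak{g}))$, show $C\subset H$, use effectiveness (plus normality) to conclude $C=\{e\}$, and then finish with Theorem~\ref{main-cor-2} and Lemma~\ref{lemma 1}. The first and last steps are fine. The genuine gap is the middle step $C\subset H$: none of the three justifications you offer for it is valid. (a)~The claim that ``$G/H$ diffeomorphic to a Euclidean space forces $\mathfrak{h}$ to contain a maximal \emph{compactly imbedded} subalgebra'' is false: take $G=\widetilde{\mathrm{SL}(2,\mathbb{R})}$ with a left-invariant metric and $H=\{e\}$, so $G/H\cong\mathbb{R}^3$, yet every maximal compactly imbedded subalgebra of $\mathfrak{sl}(2,\mathbb{R})$ is a conjugate of $\mathfrak{so}(2)\neq 0$ and hence is not contained in $\mathfrak{h}=0$. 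Contractibility constrains maximal \emph{compact} subgroups (indeed $\{e\}$ is maximal compact in $\widetilde{\mathrm{SL}(2,\mathbb{R})}$), and the compact/compactly-imbedded distinction is exactly where your ``cleanest way'' breaks. (b)~A compact positive-dimensional orbit with nonvanishing top cohomology inside a contractible manifold is not a contradiction: circles are orbits of the rotation group acting isometrically on flat $\mathbb{R}^2$, and spheres embed in $\mathbb{R}^3$. There is no retraction of the ambient space onto an orbit, so the orbit's cohomology need not inject into that of $G/H$; the ``retraction or fixed-point argument'' you invoke does not exist. (c)~The claim that a compact group acting on a contractible manifold must have a fixed point (equivalently, for normal $C$, act trivially) is also false in this generality: there are fixed-point-free smooth actions of compact connected semi-simple Lie groups on Euclidean spaces (and averaging a metric makes them isometric). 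Smith-theoretic fixed-point theorems cover tori and $p$-groups, not semi-simple groups --- and your $C$ is precisely compact semi-simple.

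What rescues the containment $C\subset H$ is not the topology of contractible spaces alone but the structure theory of the transitive group $G$, and this is exactly how the paper argues: since $G/H$ is diffeomorphic to a Euclidean space, $H$ is a maximal compact subgroup of $G$; by the conjugation theorem for maximal compact subgroups (Theorem 14.1.3 in \cite{JK}) the compact subgroup $C$ is conjugate into $H$, and normality of $C$ upgrades ``conjugate into $H$'' to ``contained in $H$.'' If you replace your items (a)--(c) by this one appeal to the conjugation theorem, the rest of your argument goes through verbatim.
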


\begin{proof}
Since $G/H$ is diffeomorphic to an Euclidean space,
the subgroup $H$ is
a maximal compact subgroup of $G$. Assume conversely that $X$
is not contained in $\mathfrak{c}
(\mathfrak{n}(\mathfrak{g}))$, then by Theorem \ref{theorem 2}
or Theorem \ref{main-cor-2}, there exists a non-trivial compact semi-simple normal subgroup $H'$ of $G$. We can get $H'\subset H$
by the conjugation theorem for maximal compact subgroups (i.e.
Theorem 14.1.3 in \cite{JK}). This is a contradiction to the effectiveness of the $G$-action.
\end{proof}

When $G/H$ is a geodesic orbit space (that means that every geodesic is an orbit of some one-parameter isometry group from $G$),
the second author have proved that any vector in
$\mathfrak{c}(\mathfrak{n}(\mathfrak{g}))$ defines a Killing vector field of constant length (see Theorem 1 in \cite{Ni2013} or Theorem 5 in \cite{Ni2019}).
By Theorem \ref{main-cor-2}, it implies an equivalence between the boundedness and the constant length condition
for Killing vector fields $X\in\mathfrak{n}(\mathfrak{g})$ for a~geodesic orbit space. Similar phenomenon can also be seen from Corollary 3.4 in
\cite{Wo2017}, for exponential solvable Lie groups endowed with
left invariant metrics.
\smallskip

Applying a
similar style for defining the restrictive Clifford--Wolf homogeneity \cite{BN2009} and the $\delta$-homogeneity
(which is equivalent to the notion of the generalized normal homogeneity) \cite{BN2008,BN2014}, we can use bounded Killing vector fields in order to
define the following condition for Riemannian homogeneous spaces.

\begin{definition}
Let $G/H$ be a Riemannian homogeneous space on which the connected Lie group $G$ acts effectively. Then it satisfies Condition {\rm(BH)} if for
any $x\in G/H$ and any
$v\in T_x(G/H)$, there exists a bounded vector $X\in\mathfrak{g}$
such that $X(x)=v$.
\end{definition}

Then Theorem \ref{main-cor-2} provides the following criterion for Condition (BH).

\begin{corollary}
Let
$G/H$ be a Riemannian homogeneous space on which the connected Lie group $G$ acts
effectively. Then it satisfies Condition {\rm(BH)} iff there exists a connected subgroup $K$ of $G$ such that
its Lie algebra is compact and the $K$-action on $G/H$ is transitive.
\end{corollary}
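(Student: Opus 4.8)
The plan is to read Condition (BH) as a statement about the connected subgroup generated by bounded vectors. By Theorem \ref{main-cor-2} the set $\mathfrak{b}$ of all bounded vectors for $G/H$ is a compact ideal of $\mathfrak{g}$, hence in particular a compact Lie subalgebra, so the connected subgroup it generates is a legitimate candidate for $K$. The whole argument then turns on the elementary fact that for a subgroup acting on $G/H$, the tangent space to an orbit at $x$ is the image of the evaluation $X\mapsto X(x)$ of the induced Killing fields, so that full-dimensional images correspond to open orbits and, on the connected manifold $G/H$, to transitivity.

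For the forward implication I would assume Condition (BH) and let $K$ be the connected subgroup of $G$ with Lie algebra $\mathfrak{b}$; this Lie algebra is compact by Theorem \ref{main-cor-2}. For each $x\in G/H$, Condition (BH) says exactly that $\{X(x):X\in\mathfrak{b}\}=T_x(G/H)$, so the orbit $K\cdot x$ has full dimension and is therefore open. Since the open $K$-orbits partition the connected space $G/H$, each is also closed, so there is a single orbit and $K$ acts transitively. This produces the required subgroup with compact Lie algebra.

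For the converse I would start from a connected subgroup $K$ with compact Lie algebra $\mathfrak{k}$ acting transitively, and fix the given $G$-invariant (hence $K$-invariant) metric on $G/H$. The key claim is that every $X\in\mathfrak{k}$ is a bounded vector, which by Lemma \ref{lemma 1} reduces to boundedness of the length function $x\mapsto\|X(x)\|$. Writing any point as $k\cdot o$ with $k\in K$ and $o=eH$, and using that $k$ acts by isometries, I get $\|X(k\cdot o)\|=\|(\mathrm{Ad}(k^{-1})X)(o)\|_o$. Since $\mathfrak{k}$ is a compact Lie algebra, $\mathrm{Ad}(K)$ preserves an inner product on $\mathfrak{k}$, so $\mathrm{Ad}(K)X$ is relatively compact; composing with the continuous map $Y\mapsto\|Y(o)\|_o$ bounds the length function, giving $\mathfrak{k}\subset\mathfrak{b}$. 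Transitivity then gives $\{X(x):X\in\mathfrak{k}\}=T_x(G/H)$ for every $x$, so every tangent vector is realized by a bounded Killing field, which is Condition (BH).

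The step I expect to be the main obstacle is the boundedness claim in the converse direction. By definition boundedness of $X$ concerns the full $\mathrm{Ad}(G)$-orbit of $X$ projected to $\mathfrak{g}/\mathfrak{h}$, and since $\mathfrak{k}$ is generally not an ideal one cannot simply restrict $\mathrm{Ad}(G)$ to $\mathrm{Ad}(K)$ as was done in Lemma \ref{lemma 6}. The device that resolves this is to abandon the projection formulation in favor of the length-function formulation of Lemma \ref{lemma 1}: the transitivity of the isometric $K$-action moves the length at an arbitrary point back to the base point at the cost of an $\mathrm{Ad}(K)$-twist, and the compactness of $\mathfrak{k}$ confines that twist to a relatively compact set, where the length is automatically bounded.
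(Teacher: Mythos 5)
Your proposal is correct and follows essentially the same route as the paper: in the forward direction you take $K$ to be the connected subgroup generated by the compact ideal of bounded vectors given by Theorem \ref{main-cor-2}, with open orbits on the connected space $G/H$ forcing transitivity, and in the converse you show every vector of a transitive connected subgroup with compact Lie algebra is bounded. The paper merely asserts that boundedness step; your length-function argument, moving any point back to the base point by an isometry from $K$ at the cost of an $\mathrm{Ad}(K)$-twist confined to a compact set by an $\mathrm{Ad}(K)$-invariant inner product, is a correct and welcome filling-in of exactly the detail the paper leaves implicit.
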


\begin{proof}
If $G/H$ satisfies Condition (BH), then
the space of all bounded vectors for $G/H$ generated a
connected quasi-compact subgroup $K$ of $G$ which acts transitively
on $G/H$.

Conversely, if such a quasi-compact subgroup exists,
all vectors in it are bounded for $G/H$. The Condition (BH) is satisfied because the exponential map from $\mathrm{Lie}(K)$ to $K$ is surjective.

This completes the proof of the corollary.
\end{proof}
\smallskip

To completely determine all bounded vectors for $G/H$, we just need to determine the subspace $\mathfrak{v}$ of
all bounded vectors $X\in\mathfrak{c}(\mathfrak{n})$ for $G/H$. Obviously the $\mathrm{Ad}(G)$-actions preserve $\mathfrak{v}$,
which is contained in the summand $\mathfrak{m}$ in the reductive decomposition. The condition that
$X\in\mathfrak{v}$, i.e. $X\in\mathfrak{c}(\mathfrak{n})$ is bounded for $G/H$, is
equivalent to that $\mathrm{Ad}(G)X$ is a bounded set in $\mathfrak{c}(\mathfrak{n})$ with respect to any norm.

The restriction of the $\mathrm{Ad}(G)$-actions defines
a Lie group endomorphism $\mathrm{Ad}_\mathfrak{v}$ from $G/N$ to the general linear group
$\mathrm{GL}(\mathfrak{v})$, where $N$ is the closed connected normal subgroup generated by $\mathfrak{n}(\mathfrak{g})$.
The tangent map at $e$ for $\mathrm{Ad}_\mathfrak{v}$ induces
the Lie algebra endomorphism which coincides with
$\mathrm{ad}_\mathfrak{v}$ defined by restricting the $\mathrm{ad}$-action from $\mathfrak{v}$ to~$\mathfrak{v}$.

The following key lemma helps us determine the subspace
$\mathfrak{v}$.

\begin{lemma}\label{lemma 7}
Let $G/H$ be a Riemannian homogeneous space on which the connected Lie group $G$ acts effectively. Keep all relevant assumptions and notations.
Then the image $\mathrm{Ad}_{\mathfrak{v}}(G)$ has a compact closure in $\mathrm{GL}(\mathfrak{v})$.
\end{lemma}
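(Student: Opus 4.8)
The plan is to exploit the defining property of $\mathfrak{v}$---that every vector in it has a bounded $\mathrm{Ad}(G)$-orbit---in order to bound the operators $\mathrm{Ad}_{\mathfrak{v}}(g)$ uniformly in operator norm, and then to upgrade this bound to relative compactness in $\mathrm{GL}(\mathfrak{v})$ using the group structure.

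First I would fix a basis $X_1,\dots,X_k$ of $\mathfrak{v}$. Since $\mathfrak{v}$ is by definition the space of bounded vectors for $G/H$ lying in $\mathfrak{c}(\mathfrak{n}(\mathfrak{g}))$, and this is a linear subspace, each $X_i$ is itself a bounded vector, so its orbit $\mathrm{Ad}(G)X_i=\mathrm{Ad}_{\mathfrak{v}}(G)X_i$ is a bounded subset of $\mathfrak{v}$ with respect to a fixed norm $\|\cdot\|$ on $\mathfrak{v}$. Here I use that $\mathfrak{v}$ is $\mathrm{Ad}(G)$-invariant, as observed just before the lemma, so each orbit really stays inside $\mathfrak{v}$ and boundedness in $\mathfrak{c}(\mathfrak{n}(\mathfrak{g}))$ coincides with boundedness in $\mathfrak{v}$.

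Next, the $i$-th column of the matrix of $\mathrm{Ad}_{\mathfrak{v}}(g)$ in the basis $\{X_i\}$ is precisely the coordinate vector of $\mathrm{Ad}_{\mathfrak{v}}(g)X_i$, so the uniform boundedness of each orbit $\mathrm{Ad}_{\mathfrak{v}}(G)X_i$ yields a uniform bound on all matrix entries, hence a constant $C$ with $\|\mathrm{Ad}_{\mathfrak{v}}(g)\|_{\mathrm{op}}\le C$ for all $g\in G$. (In infinite dimensions one would invoke the Banach--Steinhaus theorem to pass from pointwise to uniform boundedness, but since $\mathfrak{v}$ is finite dimensional this basis argument is elementary.) Consequently $\mathrm{Ad}_{\mathfrak{v}}(G)$ is a bounded subset of $\mathrm{End}(\mathfrak{v})$, so its closure in $\mathrm{End}(\mathfrak{v})$ is compact.

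Finally I would verify that this closure lies in $\mathrm{GL}(\mathfrak{v})$, which is the only step genuinely requiring the group structure and is the point to treat with care. Because $\mathrm{Ad}_{\mathfrak{v}}(G)$ is a group, for each $g$ the inverse $\mathrm{Ad}_{\mathfrak{v}}(g)^{-1}=\mathrm{Ad}_{\mathfrak{v}}(g^{-1})$ again lies in the image and thus also satisfies $\|\mathrm{Ad}_{\mathfrak{v}}(g^{-1})\|_{\mathrm{op}}\le C$; hence $|\det\mathrm{Ad}_{\mathfrak{v}}(g)|$ is bounded below by $C^{-\dim\mathfrak{v}}$ uniformly, so no limit point of $\mathrm{Ad}_{\mathfrak{v}}(G)$ can be singular. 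Therefore $\overline{\mathrm{Ad}_{\mathfrak{v}}(G)}$ is a compact subset of $\mathrm{GL}(\mathfrak{v})$, proving the lemma; equivalently, averaging any inner product over this compact group produces an $\mathrm{Ad}_{\mathfrak{v}}(G)$-invariant inner product on $\mathfrak{v}$. The main obstacle is exactly this passage from the boundedness of individual orbits to a uniform operator bound together with the invertibility of the limit points; the rest of the argument is formal.
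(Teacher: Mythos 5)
Your proposal is correct and follows essentially the same route as the paper's proof: bound the $\mathrm{Ad}(G)$-orbits of a basis of $\mathfrak{v}$ (using that $\mathfrak{v}$ lies in the reductive complement $\mathfrak{m}$, so boundedness of the projection to $\mathfrak{g}/\mathfrak{h}$ gives boundedness of the orbit itself), deduce a uniform operator bound, and then use the group structure via inverses to rule out singular limit points. The only cosmetic difference is the last step, where you bound $|\det\mathrm{Ad}_{\mathfrak{v}}(g)|$ from below, while the paper establishes the two-sided estimate $C^{-1}\|v\|\leq\|\mathrm{Ad}(g)v\|\leq C\|v\|$ and passes it to the limit; both are equally valid.
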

\begin{proof}
Fix a quadratic norm $\|\cdot\|=\langle\cdot,\cdot\rangle^{1/2}$ on $\mathfrak{v}$ and
an orthonormal basis $\{v_1,\ldots,v_k\}$ for $\mathfrak{v}$.
By the boundedness of each $v_i$ for $G/H$, and the speciality of
the reductive decomposition, we can find a positive $c_i>0$, such
that
$$
\|\mathrm{Ad}(g)v_i\|<c_i,\quad\forall\, g\in G,\quad\forall\, i=1,\ldots,k.
$$
For any $v\in\mathfrak{v}$ with $||v||=1$, we can present it
as $v=\sum_{i=1}^k a_iv_i$ with $\sum_{k=1}^k {a_i}^2=1$,
then for any $g\in G$ we have
$$
\|\mathrm{Ad}(g)v\|\leq\sum_{i=1}^k|a_i|\cdot \|\mathrm{Ad}(g)v_i\|\leq
C=c_1+\cdots+c_k.
$$
So we get
\begin{equation}\label{009}
C^{-1}\|v\|\leq \|\mathrm{Ad}(g)v\|\leq C \|v \|,
\quad\forall\, g\in G,\quad\forall\, v\in\mathfrak{v}.
\end{equation}

For
any sequence $\mathrm{Ad}_\mathfrak{v}(g_i)$ with $g_i\in G$, we can find a subsequence $\mathrm{Ad}_\mathfrak{v}(g'_i)$ such that \linebreak
$\lim\limits_{i\rightarrow\infty}\mathrm{Ad}_\mathfrak{v}(g'_i)v_j$ exists for each $j$,
so $\mathrm{Ad}_\mathfrak{v}(g'_i)$ converges to a $\mathbb{R}$-linear endomorphism $A$.
By continuity, the estimates (\ref{009}) for each
$\mathrm{Ad}_\mathfrak{v}(g_i)$ can be inherited by $A$, from which we see that $A\in \mathrm{GL}(\mathfrak{v})$.
So
$\mathrm{Ad}_\mathfrak{v}(G)$ has a compact closure in $\mathrm{GL}(\mathfrak{v})$, which
proves this lemma.
\end{proof}

By Lemma \ref{lemma 7},
$\mathrm{ad}_\mathfrak{v}$ maps the reductive Lie algebra
$\mathrm{Lie}(G/N)=\mathfrak{s}_c\oplus\mathfrak{s}_{nc}
\oplus\mathfrak{a}$, where $\mathfrak{a}=
\mathfrak{r}/\mathfrak{n}$, to a compact subalgebra. The summand
$\mathfrak{s}_{nc}$ must be mapped to 0, from which we get $[\mathfrak{s}_{nc},\mathfrak{v}]=0$.
Then it is easy to see
that
$$
\mathfrak{v}\subset
\mathfrak{c}_{\mathfrak{c}(\mathfrak{n})}
(\mathfrak{s}_{nc})\quad\mbox{and}\quad
[\mathfrak{r}(\mathfrak{g}),\mathfrak{c}_{\mathfrak{c}(\mathfrak{n})}
(\mathfrak{s}_{nc})]
\subset\mathfrak{c}_{\mathfrak{c}
(\mathfrak{n})}(\mathfrak{s}_{nc}).
$$

Moreover, the Abelian summand $\mathfrak{a}$ in $\mathrm{Lie}(G/N)$ is mapped to a space of semi-simple
matrices with imaginary eigenvalues, so $\mathfrak{v}$ can be
decomposed as a sum $$\mathfrak{v}=\mathfrak{v}_1\oplus\cdots\oplus\mathfrak{v}_k$$
of irreducible representations of
$\mathfrak{a}$, each of which is either one-dimensional or two-dimensional. Any one-dimensional $\mathfrak{v}_i$
must be a
trivial representation of $\mathfrak{a}$, and hence $\mathfrak{v}_i\subset
\mathfrak{c}_{\mathfrak{c}(\mathfrak{r}(\mathfrak{g}))}
(\mathfrak{s}_{nc})$.
Any two-dimensional $\mathfrak{v}_i$ corresponds to a pair of imaginary weights in
$\mathfrak{a}^*\otimes\mathbb{C}$, i.e. $\mathbb{R}$-linear functionals
$\pm\lambda:\mathfrak{a}\rightarrow\mathbb{R}\sqrt{-1}$, such that
the eigenvalues of $\mathrm{ad}_\mathfrak{v}(u):\mathfrak{v}_i\rightarrow\mathfrak{v}_i$ are $\pm\lambda(u)$.

Conversely, we consider the sum $\mathfrak{v}'$
of the centralizer $\mathfrak{c}_{\mathfrak{c}(\mathfrak{r}(\mathfrak{g}))}
(\mathfrak{s}_{nc})$ and all two-dimensio\-nal irreducible
$\mathrm{ad}(\mathfrak{r})$-representations in
$ \mathfrak{c}_{\mathfrak{c}(\mathfrak{n}(\mathfrak{g}))}
(\mathfrak{s}_{nc})$
corresponding to imaginary weights of
$\mathfrak{a}=\mathfrak{r}
(\mathfrak{g})/\mathfrak{n}(\mathfrak{g})$.
Then $\mathfrak{v}'$ is $\mathrm{Ad}(G)$-invariant, and $\mathfrak{v}\subset\mathfrak{v}'$. Denote
 $\mathrm{Ad}_{\mathfrak{v}'}$ the restriction
of the $\mathrm{Ad}_\mathfrak{g}$-action from $\mathfrak{v}'$ to $\mathfrak{v}'$.
The subspace
$\mathfrak{v}'$ satisfies similar descriptions
as given above for $\mathfrak{v}$.
The image group $\mathrm{Ad}_{\mathfrak{v}'}(R/N)$ is contained in a torus which commutes with
the image $\mathrm{Ad}_{\mathfrak{v}'}(S_c)\subset\mathrm{GL}(\mathfrak{v}')$ of the compact subgroup $S_c=\exp\mathfrak{s}_c$,  so
$$
\mathrm{Ad}_{\mathfrak{v}'}(G)=
\mathrm{Ad}_{\mathfrak{v}'}(S_c)\cdot\mathrm{Ad}_{\mathfrak{v}'}
(R/N)\subset\mathrm{Ad}_{\mathfrak{v}'}(S_c)\cdot
\overline{\mathrm{Ad}_{\mathfrak{v}'}
(R/N)}
$$
has a compact closure in~$\mathrm{GL}(\mathfrak{v}')$.
This implies all vectors $X\in\mathfrak{v}'$ are bounded for $G/H$, i.e. $\mathfrak{v}=\mathfrak{v}'$.

Summarizing above argument, we get the following theorem which determines all bounded vectors $X\in\mathfrak{c}
(\mathfrak{n}(\mathfrak{g}))$ for a Riemannian homogeneous space
$G/H$.

\begin{theorem}\label{main-cor-3}
Let $G/H$ be a Riemannian homogeneous space on which the connected Lie group $G$ acts effectively.
Keep all relevant assumptions and notations. Then the space $\mathfrak{v}$ of
all bounded vectors $X\in\mathfrak{c}(\mathfrak{n})$ for $G/H$ is the sum of
$\mathfrak{c}_{\mathfrak{c}(\mathfrak{r}(\mathfrak{g}))}
(\mathfrak{s}_{nc})$ and all two-dimensional
irreducible representations in
$\mathfrak{c}_{\mathfrak{c}(\mathfrak{n})}(\mathfrak{s}_{nc})$ for the $\mathrm{ad}(\mathfrak{r}(\mathfrak{g}))$-actions,
which corresponds to nonzero imaginary weights in $\mathfrak{a}^*\otimes\mathbb{C}$,
i.e. nonzero $\mathbb{R}$-linear functionals $\lambda:\mathfrak{r}(\mathfrak{g})\rightarrow
\mathfrak{r}(\mathfrak{g})/
\mathfrak{n}(\mathfrak{g})\rightarrow\mathbb{R}\sqrt{-1}$.
\end{theorem}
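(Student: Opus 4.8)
The plan is to characterize $\mathfrak{v}$ by a double inclusion $\mathfrak{v}\subset\mathfrak{v}'$ and $\mathfrak{v}'\subset\mathfrak{v}$, where $\mathfrak{v}'$ denotes the sum described in the statement, using Lemma \ref{lemma 7} as the central tool. Since $\mathrm{Ad}_{\mathfrak{v}}(G)$ has compact closure in $\mathrm{GL}(\mathfrak{v})$, there is an $\mathrm{Ad}_{\mathfrak{v}}(G)$-invariant inner product on $\mathfrak{v}$, so $\mathrm{ad}_{\mathfrak{v}}$ carries $\mathfrak{g}$ into the Lie algebra of skew-symmetric endomorphisms, and in particular factors through a homomorphism of the reductive quotient $\mathrm{Lie}(G/N)=\mathfrak{s}_c\oplus\mathfrak{s}_{nc}\oplus\mathfrak{a}$, with $\mathfrak{a}=\mathfrak{r}/\mathfrak{n}$, into a compact Lie algebra. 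Everything then reduces to understanding how the three factors act.

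First I would prove $\mathfrak{v}\subset\mathfrak{v}'$. Because $\mathfrak{s}_{nc}$ is a sum of noncompact simple ideals and no noncompact simple Lie algebra admits a nonzero homomorphism into a compact one, the restriction of $\mathrm{ad}_{\mathfrak{v}}$ to $\mathfrak{s}_{nc}$ must vanish; hence $[\mathfrak{s}_{nc},\mathfrak{v}]=0$ and $\mathfrak{v}\subset\mathfrak{c}_{\mathfrak{c}(\mathfrak{n})}(\mathfrak{s}_{nc})$. The Abelian factor $\mathfrak{a}$ is carried into the Lie algebra of a torus, so it acts by commuting semisimple endomorphisms with purely imaginary spectrum. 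Decomposing $\mathfrak{v}$ into real irreducible $\mathfrak{a}$-submodules, each summand is one-dimensional and then necessarily trivial, or two-dimensional with a conjugate pair of nonzero imaginary weights $\pm\lambda$, where $\lambda:\mathfrak{r}\to\mathfrak{r}/\mathfrak{n}\to\mathbb{R}\sqrt{-1}$. A trivial one-dimensional summand satisfies $[\mathfrak{r},\,\cdot\,]=0$, so combined with $[\mathfrak{s}_{nc},\,\cdot\,]=0$ it lies in $\mathfrak{c}_{\mathfrak{c}(\mathfrak{r}(\mathfrak{g}))}(\mathfrak{s}_{nc})$, while a two-dimensional summand is exactly one of the representations listed; this gives $\mathfrak{v}\subset\mathfrak{v}'$.

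For the reverse inclusion I would show that every vector of $\mathfrak{v}'$ is bounded. One checks directly that $\mathfrak{v}'$ is $\mathrm{Ad}(G)$-invariant. Writing $G$ as generated by $S_c=\exp\mathfrak{s}_c$, $S_{nc}=\exp\mathfrak{s}_{nc}$ and $R$, the factor $S_{nc}$ acts trivially on $\mathfrak{v}'$ by construction, $\mathrm{Ad}_{\mathfrak{v}'}(S_c)$ is compact since $S_c$ is a compact group, and $\mathrm{Ad}_{\mathfrak{v}'}(R)$ factors through $R/N$ into a torus because its Lie algebra acts by the imaginary weights $\lambda$, so its closure is compact and commutes with $\mathrm{Ad}_{\mathfrak{v}'}(S_c)$. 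Hence $\overline{\mathrm{Ad}_{\mathfrak{v}'}(G)}$ is compact in $\mathrm{GL}(\mathfrak{v}')$, every orbit $\mathrm{Ad}(G)X$ with $X\in\mathfrak{v}'$ is bounded, and so $\mathfrak{v}'\subset\mathfrak{v}$. Combining the two inclusions yields $\mathfrak{v}=\mathfrak{v}'$.

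The hard part will be the structural analysis in the first inclusion: one must control precisely how $\mathfrak{s}_c$, $\mathfrak{s}_{nc}$ and $\mathfrak{a}$ act on $\mathfrak{v}$ and verify that the imaginary-weight decomposition over $\mathbb{R}$ really partitions $\mathfrak{v}$ into the claimed trivial and two-dimensional pieces. The vanishing $[\mathfrak{s}_{nc},\mathfrak{v}]=0$ rests on a compact image killing the noncompact semisimple part, and identifying each two-dimensional summand with a weight $\lambda:\mathfrak{r}\to\mathfrak{r}/\mathfrak{n}\to\mathbb{R}\sqrt{-1}$ requires that the $\mathfrak{a}$-action be semisimple with imaginary spectrum, which is exactly the output of the compactness in Lemma \ref{lemma 7}.
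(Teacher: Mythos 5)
Your proposal is correct and follows essentially the same route as the paper: both use Lemma \ref{lemma 7} to get compact closure of $\mathrm{Ad}_{\mathfrak{v}}(G)$, kill $\mathfrak{s}_{nc}$, decompose $\mathfrak{v}$ under the $\mathfrak{a}=\mathfrak{r}/\mathfrak{n}$-action into trivial one-dimensional and imaginary-weight two-dimensional pieces, and then prove the reverse inclusion by showing $\mathrm{Ad}_{\mathfrak{v}'}(G)\subset\mathrm{Ad}_{\mathfrak{v}'}(S_c)\cdot\overline{\mathrm{Ad}_{\mathfrak{v}'}(R/N)}$ has compact closure. The only cosmetic difference is that you make explicit the invariant inner product and skew-symmetry argument that the paper leaves implicit.
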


A theoretical algorithm presenting all vectors in $\mathfrak{v}$ can be given as follows. Let us consider
the complex representation for $\mathrm{ad}(\mathfrak{r}(\mathfrak{g}))$-actions on
$\mathfrak{c}_{\mathfrak{c}(\mathfrak{n})}(\mathfrak{s}_{nc})\otimes\mathbb{C}$, then we can find distinct real weights
$\lambda_i\in\mathfrak{a}^*
\subset\mathfrak{r}^*$, $1\leq i\leq n_1$, and non-real complex weights $a_j\pm b_j\sqrt{-1}\in\mathfrak{a}^*\otimes\mathbb{C}
\subset\mathfrak{r}^*\otimes\mathbb{C}$, with $1\leq i\leq n_2$ and $b_j>0$, such that
we have the direct sum decomposition
$$
\mathfrak{c}_{\mathfrak{c}(\mathfrak{n})}(\mathfrak{s}_{nc})\otimes\mathbb{C}=
\bigoplus_{i=1}^{n_1}\mathfrak{u}^\mathbb{C}_{\lambda_i}
\,\oplus\,\bigoplus_{j=1}^{n_2}\left(
\mathfrak{u}^{\mathbb{C}}_{a_j+b_j\sqrt{-1}}+
\mathfrak{u}^{\mathbb{C}}_{a_j-b_j\sqrt{-1}}\right),
$$
where the complex subspace
$\mathfrak{u}^{\mathbb{C}}_\alpha$ for any real weight $\alpha\in\mathfrak{a}^*$ or any complex weight $\alpha\in\mathfrak{a}^*\otimes\mathbb{C}$ is defined as
$$
\mathfrak{u}^\mathbb{C}_\alpha
=\left\{X\in\mathfrak{c}_{\mathfrak{c}(\mathfrak{n})}
(\mathfrak{s}_{nc})\otimes\mathbb{C}\,|\,
\bigl(\mathrm{ad}(u)-\alpha(u)\mathrm{Id}\bigr)^k X=0,\forall\, u\in\mathfrak{r}(\mathfrak{g}), \mbox{ for some }k>0\right\}.
$$

We assume $a_j=0\in\mathfrak{a}^*$ iff $1\leq j\leq m$, where $m$ can be zero.
Denote $\mathfrak{v}^{\mathbb{C}}_{\alpha}$ be the eigenvector subspace in $\mathfrak{u}^{\mathbb{C}}_{\alpha}$ for the
$\mathrm{ad}(\mathfrak{r}(\mathfrak{g}))$-actions, i.e.
$$
\mathfrak{v}^\mathbb{C}_{\alpha}=
\left\{X\in\mathfrak{u}^{\mathbb{C}}_{\alpha}\,|\,
\mathrm{ad}(u)X=\alpha(u) X,\forall \,u\in\mathfrak{r}(\mathfrak{g})\right\}.
$$
We take any basis $\left\{v_{j,1},\ldots,v_{j,k_j}\right\}$ for $\mathfrak{v}^\mathbb{C}_{b_j\sqrt{-1}}$\,, then
the space $\mathfrak{v}$ of all bounded vectors in
$\mathfrak{c}(\mathfrak{n})$ for $G/H$ can be presented as
\begin{eqnarray*}
\mathfrak{v}&=&\left(\mathfrak{v}^{\mathbb{C}}_0\cap\mathfrak{v}\right)
\oplus
\bigoplus_{j=1}^{m}\left(\left(\mathfrak{v}^\mathbb{C}_{b_j\sqrt{-1}}+
\mathfrak{v}^{\mathbb{C}}_{-b_j\sqrt{-1}}\right)\cap\mathfrak{v}\right)\\
&=&\mathfrak{c}_{\mathfrak{c}(\mathfrak{r}(\mathfrak{g}))}
(\mathfrak{s}_{nc})\oplus
\mathrm{span}^\mathbb{R} \left\{v_{j,k}+\overline{v_{j,k}},
\sqrt{-1}\bigl(v_{j,k}-\overline{v_{j,k}}\bigr),\forall\, 1\leq j\leq m, 1\leq k\leq k_j\right\}.
\end{eqnarray*}
\smallskip
We hope that all the above results will be useful in the study of related topics.
\medskip

{\bf Acknowledgements.} The authors would like to sincerely thank the Chern Institute and the School of Mathematical Sciences in Nankai University for their hospitality during the preparation of
this paper. They would also sincerely thank Joseph A. Wolf for reading this paper and helpful discussions. The first author is supported by National Natural Science Foundation of China (No. 11821101, No. 11771331), Beijing Natural Science Foundation
(No. 00719210010001, No. 1182006), Capacity Building for Sci-Tech  Innovation -- Fundamental Scientific Research Funds (No. KM201910028021).

\vspace{5mm}

\begin{thebibliography}{99}

\bibitem{AB2015} M.M. Alexandrino and R.G. Bettiol, Lie Groups and Geometric Aspects of Isometric Actions, Springer, 2015.

\bibitem{BCS2000} D. Bao, S.S. Chern, and Z. Shen, Introduction to Riemann-Finsler Geometry, Springer-Verlag, New York, 2000.

\bibitem{BN2008-2} V.N. Berestovskii and Yu.G. Nikonorov,
Killing vector fields of constant length on Riemannian manifolds,
Sib. Math. J. {\bf49} (3) (2008), 395--407.

\bibitem{BN2008} V.N. Berestovskii and Yu.G. Nikonorov, On $\delta$-homogeneous Riemannian manifolds, Differential Geom. Appl. {\bf 26} (2008), 514--535.

\bibitem{BN2009} V.N. Berestovskii and Yu.G. Nikonorov, Clifford--Wolf homogeneous Riemannian manifolds, J.~Differential Geom. {\bf 82} (2009), 467--500.

\bibitem{BN2014} V.N. Berestovskii and Yu.G. Nikonorov, Generalized normal homogeneous Riemannian metrics on spheres and projective spaces,
Ann. Global Anal. Geom. {\bf 45} (3) (2014), 167--196.

\bibitem{DH2002} S. Deng and Z. Hou, The group of isometries of a Finsler space, Pacific J. math.
{\bf  207} (2002), 149--157.

\bibitem{DX2014} S. Deng and M. Xu, Clifford--Wolf translations of Finsler spaces, Forum Math. {\bf26}
(5) (2014), 1413--1428.

\bibitem{Hu1972} J.E. Humphreys, Introduction to Lie Algebras
and Representation Theorey, Springer-Verlag, New York, 1972.

\bibitem{JK} J. Hilgert and K.H. Neeb, Structure and Geometry of Lie Groups, Springer Monographs in Mathematics, Springer, New York, 2012.

\bibitem{MM1988} I.D. Miatello and R.J. Miatello, Transitive isometry groups with noncompact isotropy,
Pacific J. Math. {\bf131} (1) (1988), 167--178.

\bibitem{MMW} I.D. Miatello, R.J. Miatello, and J.A. Wolf,
Bounded isometries and homogeneous Riemannian quotient manifolds,
Geometriae Dedicata {\bf21} (1986), 21--27.


\bibitem{Ni2013} Yu.G. Nikonorov, Geodesic orbit manifolds and Killing fields of constant length, Hiroshima Math.~J. {\bf43}
    (1) (2013), 129--137.

\bibitem{Ni2015} Yu.G. Nikonorov, Killing vector fields of constant length on compact homogeneous Riemannian manifolds, Ann. Glob. Anal. Geom.
{\bf48} (4) (2015), 305--330.

\bibitem{Ni2017} Yu.G. Nikonorov, On the structure of geodesic orbit Riemannian spaces, Ann. Glob.
Anal. Geom. {\bf 52} (3) (2017), 289--311.

\bibitem{Ni2019} Yu.G. Nikonorov, Spectral properties of Killing vector fields of constant length, arXiv:1902.02500v1.

\bibitem{Ti1964} J. Tits, Automorphismes \`{a} d\'{e}placement born\'{e} des groupes de Lie, Topology {\bf 3} (Suppl. 1) (1964),
    97--107.

\bibitem{Wo1964} J.A. Wolf, Homogeneity and bounded isometries in manifolds of negative curvature, Illinois
    J. Math. {\bf8} (1964), 14--18.

\bibitem{Wo2017} J.A. Wolf, Bounded isometries and homogeneous quotients, Journal Geom. Anal. {\bf27}
    (1) (2017), 56--64.

\bibitem{WPX} J.A. Wolf, F. Podest\'{a}, M. Xu, Toward a classification of Killing vector fields of constant length on
pseudo-Riemannian normal homogeneous spaces. J. Differ. Geom. {\bf105} (3)
    (2017), 519--532.

\bibitem{XW} M. Xu and J.A. Wolf, Killing vector fields of constant length on Riemannian normal homogeneous spaces, Transform. Groups. {\bf21} (3) (2016), 871--902.

\end{thebibliography}
\end{document}